\newtheorem{formula}{}[section]
\newtheorem{proposition}[formula]{Proposition}
\newtheorem{corollary}[formula]{Corollary}
\newtheorem{lemma}[formula]{Lemma}
\newtheorem{theorem}[formula]{Theorem}
\theoremstyle{definition}
\newtheorem{definition}[formula]{Definition}
\newtheorem{example}[formula]{Example}
\theoremstyle{remark}
\newtheorem{remark}[formula]{Remark}
\newcommand{\bs}[1]{\boldsymbol{#1}}
\newcommand{\wt}[1]{\widetilde{#1}}
\newcommand{\w}[1]{\widehat{#1}}
\begin{document}

\title[On uniqueness of the equivariant smooth structure on $\mathbb{R}\mathcal{Z}_P$]{On uniqueness of the equivariant smooth structure on~a~real moment-angle manifold}
%\,\footnote{\,{\it
%\uppercase{M}\uppercase{S}\uppercase{C}\,2000:\,} 55\uppercase{S}30,
%17\uppercase{B}56, 17\uppercase{B}70, 17\uppercase{B}10.
%\newline {\it \uppercase{K}eywords:\,}
%\uppercase{M}assey products, graded \uppercase{L}ie algebras,
%formal connection,
%\uppercase{M}aurer-\uppercase{C}artan equation,
%representation, cohomology.}}
%\author[D.V.~Gugnin]{Dmitry~Gugnin}
%\address{Department of Mechanics and Mathematics, Moscow State University, 119992 Moscow, Russia}
%\email{dmitry-gugnin@ya.ru}
\author[N.Yu.~Erokhovets, E.S.~Erokhovets]{Nikolai~Erokhovets and Elena Erokhovets}
\address{Department of Mechanics and Mathematics, Lomonosov Moscow State University\&International Laboratory of Algebraic Topology and its Applications, National Research University Higher School of Economics, Moscow}
\email{erochovetsn@hotmail.com}

\def\sgn{\mathrm{sgn}\,}
\def\bideg{\mathrm{bideg}\,}
\def\tdeg{\mathrm{tdeg}\,}
\def\sdeg{\mathrm{sdeg}\,}
\def\grad{\mathrm{grad}\,}
\def\ch{\mathrm{ch}\,}
\def\sh{\mathrm{sh}\,}
\def\th{\mathrm{th}\,}

\def\mod{\mathrm{mod}\,}
\def\In{\mathrm{In}\,}
\def\Im{\mathrm{Im}\,}
\def\Ker{\mathrm{Ker}\,}
\def\Hom{\mathrm{Hom}\,}
\def\Tor{\mathrm{Tor}\,}
\def\rk{\mathrm{rk}\,}
\def\codim{\mathrm{codim}\,}

\def\ko{{\mathbf k}}
\def\sk{\mathrm{sk}\,}
\def\RC{\mathrm{RC}\,}
\def\gr{\mathrm{gr}\,}

\def\R{{\mathbb R}}
\def\C{{\mathbb C}}
\def\Z{{\mathbb Z}}
\def\A{{\mathcal A}}
\def\B{{\mathcal B}}
\def\K{{\mathcal K}}
\def\M{{\mathcal M}}
\def\N{{\mathcal N}}
\def\E{{\mathcal E}}
\def\G{{\mathcal G}}
\def\D{{\mathcal D}}
\def\F{{\mathcal F}}
\def\L{{\mathcal L}}
\def\V{{\mathcal V}}
\def\H{{\mathcal H}}

\thanks{The work was carried out as a result of the research in the framework of the ``Mirror Laboratories'' project of the National Research University Higher School of Economics }

%\address{Department of Mechanics and Mathematics, \\
%Moscow State University,\\
%Leninskie Gory, 119992 Moscow, Russia \vspace{1mm}\\
%E-mail: erochovetsn@hotmail.com}

%\thanks{The research was supported by the RFBR grant No 18-51-50005}

\subjclass[2010]{
%05C40, % Connectivity of graphs
%05C75, %Structural characterization of families of graphs
%05C76,  %Graph operations (line graphs, products, etc.)
%13F55, %Stanley-Reisner face rings; simplicial complexes
57S12, %Toric topology
57S17, %Finite transformation groups
57S25, %Groups acting on specific manifolds
52B05, %Combinatorial properties (number of faces,
%52B10, %Three-dimensional polytopes
52B70, %Polyhedral manifolds
%57R19, Algebraic topology on manifolds
57R18, %Topology and geometry of orbifolds
57R91%Equivariant algebraic topology of manifolds
}

\keywords{convex polytope, real moment-angle manifold, smooth structure}

\begin{abstract}
The paper is devoted to the well-known problem of smooth structures
on moment-angle manifolds.
Each real or complex moment-angle manifold has an equivariant smooth
structure given by an intersection of quadrics corresponding to a geometric realisation
of a polytope. In 2006 F.~Bosio and L.~Meersseman proved that 
complex moment-angle manifolds of combinatorially equivalent simple polytopes
are equivariantly diffeomorphic. Using arguments from calculus we derive from this result that real moment-angle manifolds 
of combinatorially equivalent simple polytopes are equivariantly diffeomorphic and the
polytopes are diffeomorphic as manifolds with corners. 
\end{abstract}
\maketitle
\tableofcontents
\setcounter{section}{0}
\section{Introduction}
A moment-angle manifold $\mathcal{Z}_P$ of a simple $n$-polytope $P$ was introduced M.W.~Davis and T.~Januszkiewicz 
in \cite{DJ91}. If $P$ has $m$ facets, then by definition  $\mathcal{Z}_P$ is a~topological 
manifold with a canonical action of the torus $T^m=(S^1)^m$ with $\mathcal{Z}_P/T^m=P$. Moreover, if the polytopes $P$
and $Q$ are combinatorially equivalent, then $\mathcal{Z}_P$ and $\mathcal{Z}_Q$ are equivariantly homeomorphic.
It is mentioned in \cite[Section 6.4]{DJ91} that $\mathcal{Z}_P$ is a smooth manifold, but without details.
An explicit equivariant smooth structure was described by V.M.~Buchstaber and T.E.~Panov in \cite{BP02}, where   
the moment-angle manifold was identified with a certain intersection of real quadrics defined by a geometric realization of 
simple polytopes (see \cite[Construction 3.1.8]{BP02}, and also \cite[Section 3]{BPR07}). In \cite[Corollary 4.7]{BM06}
it was proved that for two geometric realizations of the same combinatorial simple
polytope the corresponding smooth moment-angle manifolds are equivariantly diffeomorphic. It is mentioned in 
\cite[Remark after Proposition 6.2.3]{BP15} that this  can be also proved using the fact that combinatorially
equivalent simple polytopes are diffeomorphic as manifolds with corners (see also \cite[Corollary 6.3]{D14} ). The
latter fact was proved by M.~Wiemeler in \cite[Corollary 5.3]{W13} (see also \cite[Corollary 1.3]{D14}). 

This article is devoted to the analogous question for the real moment-angle manifold $\mathbb R\mathcal{Z}_P$.
It then it can be identified with the set of real points of $\mathcal{Z}_P$ in its realization by quadrics and 
also is defined by quadrics. There is a natural action of $\mathbb Z_2^m$ on $\mathbb R\mathcal{Z}_P$. 
We give a survey of known results and give a new prove of the following fact.
\begin{theorem}
If the polytopes $P$ and $Q$ are combinatorially equivalent, then $\mathbb R\mathcal{Z}_P$ and $\mathbb R\mathcal{Z}_Q$ are equivariantly diffeomporphic.
\end{theorem}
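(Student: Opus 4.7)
The plan is to produce the equivariant diffeomorphism in two moves: first descend the Bosio--Meersseman equivariant diffeomorphism of complex moment-angle manifolds to a diffeomorphism of the orbit polytopes $\mathcal{Z}_P/T^m=P$ and $\mathcal{Z}_Q/T^m=Q$ as manifolds with corners, and then re-ascend to the real moment-angle manifolds via their polyhedral-product description. Calculus enters twice along the way: once to match the smooth structure of $\mathcal{Z}_P$ near a boundary stratum with the manifold-with-corners structure on $P$, and once to lift a facet-preserving diffeomorphism of $P$ to a $\mathbb{Z}_2^m$-equivariant diffeomorphism of $\mathbb{R}\mathcal{Z}_P$. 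The first of these I expect to be the main obstacle.

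Starting from \cite[Corollary 4.7]{BM06}, I would fix a $T^m$-equivariant diffeomorphism $\phi\colon\mathcal{Z}_P\to\mathcal{Z}_Q$. The moment map, given in quadric coordinates by $(z_1,\dots,z_m)\mapsto(|z_1|^2,\dots,|z_m|^2)$ up to affine reparametrisation, identifies $P$ with $\mathcal{Z}_P/T^m$ and sends the isotropy strata to the faces of $P$; the same holds for $Q$. Consequently $\phi$ descends to a homeomorphism $\bar\phi\colon P\to Q$ respecting the face stratification.

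The central step is to show that $\bar\phi$ is a diffeomorphism of $P$ and $Q$ as manifolds with corners. Near a point in the relative interior of a codimension-$k$ face, local coordinates on $\mathcal{Z}_P$ are $(z_1,\dots,z_k,\theta_{k+1},\dots,\theta_m,x_{k+1},\dots,x_n)$, where $z_1,\dots,z_k\in\mathbb{C}$ are the coordinates vanishing on the adjacent facets, $\theta_j\in S^1$ parametrise the non-collapsing torus factor, and the $x_j$ are interior coordinates along the face. Since $\phi$ is $T^m$-equivariant and smooth, each pulled-back function $|\phi(z)_j|^2$ is a smooth $T^m$-invariant function in these coordinates, hence a smooth function of the invariants $(|z_1|^2,\dots,|z_k|^2,x_{k+1},\dots,x_n)$. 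These invariants are precisely the smooth charts on $P$ as a manifold with corners, so $\bar\phi$ is smooth; the same argument applied to $\phi^{-1}$ supplies a smooth two-sided inverse.

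Finally, I would construct $\tilde\phi\colon\mathbb{R}\mathcal{Z}_P\to\mathbb{R}\mathcal{Z}_Q$ from the polyhedral-product description $\mathbb{R}\mathcal{Z}_P=(\mathbb{Z}_2^m\times P)/\!\sim$, where $(g,p)\sim(g',p)$ iff $g(g')^{-1}\in\mathbb{Z}_2^{F(p)}$, by setting $[g,p]\mapsto[g,\bar\phi(p)]$. This is well-defined and $\mathbb{Z}_2^m$-equivariant because $\bar\phi$ matches the isotropy subgroups $\mathbb{Z}_2^{F(p)}$ under the combinatorial equivalence. For smoothness, local coordinates on $\mathbb{R}\mathcal{Z}_P$ near a codimension-$k$ stratum are $(y_1,\dots,y_k,x_{k+1},\dots,x_n)$ with $u_i=y_i^2$ the corresponding corner coordinates on $P$; since $\bar\phi$ preserves facets, Hadamard's lemma writes $\bar\phi^*(u'_i)=u_i h_i(u,x)$ with $h_i>0$ smooth, so the $\mathbb{Z}_2^m$-equivariant lift $y'_i=y_i\sqrt{h_i(y^2,x)}$ is smooth, completing the argument.
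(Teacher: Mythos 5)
Your plan mirrors the paper's at the top level --- descend the Bosio--Meersseman $\mathbb T^m$-equivariant diffeomorphism $f\colon\mathcal Z_P\to\mathcal Z_Q$ to the orbit polytope, then re-ascend to $\mathbb R\mathcal Z_P$ --- but the technique at the crucial smoothness step is genuinely different. You factor through the linear manifold-with-corners structure on $P$ and invoke Schwarz's theorem on smooth invariants (a smooth $T^k$-invariant function on an open set in $\mathbb C^k$ is a smooth function of $|z_1|^2,\dots,|z_k|^2$); the paper instead works in the quadratic coordinates $R_j=|z_j|$ and carries out an explicit computation of one-sided partial derivatives $\partial g_j/\partial_+R_k$ of the descended map (Lemma~4.5), concluding only that $g$ and the lift $\widehat f(u)=(\operatorname{sign}(u_j)\,g_j(|u_1|,\dots,|u_m|))_j$ are of class $C^1$. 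Your route buys $C^\infty$ outright and cleanly separates descent from ascent: once $\bar\phi$ is a facet-preserving diffeomorphism of manifolds with corners in linear coordinates, the Hadamard factorization $\bar\phi^*u_i'=u_ih_i$ with $h_i>0$ gives a manifestly smooth $\mathbb Z_2^m$-equivariant lift $y_i'=y_i\sqrt{h_i(y^2,\cdot)}$, the positivity of $h_i$ along $\{u_i=0\}$ following from nondegeneracy of the Jacobian of the corner diffeomorphism there. The trade-off is that Schwarz's theorem is a substantially heavier tool --- it rests on the Malgrange preparation theorem --- whereas the paper stays within elementary multivariable calculus at the price of a longer computation and a weaker ($C^1$) regularity conclusion. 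Both arguments are valid; yours is shorter and stronger if one is willing to take Schwarz as given, and you should cite it explicitly rather than merely assert the factorization through the basic invariants.
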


The arguments from \cite{BM06} can not be directly generalized to real moment-angle manifolds, since
\cite[Lemma 3.4]{BM06} stating that any diffeomorphism of the standard sphere $S^{2k-1}\subset \mathbb C^k$ 
equivariant with respect to the standard action of $\mathbb T^k$ is equivariantly isotopic to identity is not valid for
$\mathbb Z_2$-equivariant diffeomorphisms of $S^k\subset\mathbb R^k$. In \cite[Section 3]{KMY18} S.~Kuroki, M.~Masuda, L.~Yu give a sketch of a proof that there  is a unique smooth structure on $\mathbb R\mathcal{Z}_P$ such that the
canonical $\mathbb Z_2^m$ action is smooth (as well as a unique smooth structure on a small cover such
that the canonical $\mathbb Z_2^n$-action is smooth). It follows the approach of M.~Wiemeler \cite{W13} 
and uses the notions of $G$-normal and $B$-normal systems introduced and studied by M.W.~Davis in~\cite{D78, D14}.

In our paper, starting from a $\mathbb T^m$-equivariant diffeomorphism $\mathcal{Z}_P\to \mathcal{Z}_Q$
constructed in \cite{BM06} we built a $\mathbb Z_2^m$-equivariant diffeomorphism 
$\mathbb R\mathcal{Z}_P\to\mathbb R\mathcal{Z}_Q$ and a diffeomorphism $P\to Q$ of manifolds with corners
with natural linear smooth structures. 
The latter uses the fact that the linear smooth structure on a simple polytope $P$ arising from its embedding
to $\mathbb R^n$ is diffeomorphic to the quadratic smooth structure arising from the real moment-angle manifold
$\mathbb R\mathcal{Z}_P$.

\section{Moment-angle manifold of a simple polytope}\label{sec:ma}
In this section we give basic definitions on moment-angle manifolds.

A {\em convex polytope} $P$ is a {\em bounded} intersection of a finite set of half-spaces:
$$
P=\{\boldsymbol{x}\in \mathbb R^n\colon \boldsymbol{a}_i\boldsymbol{x}+b_i\geqslant 0, i=1,\dots,m\}, 
$$
where $\boldsymbol{a}_i\in(\mathbb R^n)^*$ and $b_i\in\mathbb R$. 
For the $m\times n$-matrix $A$ with rows $\boldsymbol{a}_i$ its rank is equal to $n$, hence 
the affine mapping $j_P\colon \mathbb R^n\to \mathbb R^m$ defined as 
$$
j_P(\boldsymbol{x})=(y_1,\ldots,y_m), \text{where $y_i=\boldsymbol{a}_i\boldsymbol{x} + b_i$},
$$ 
is an embedding, and the image of the polytope $P$ is 
$$
j_P(\mathbb R^n)\cap \mathbb R^m_{\geqslant},\text{ where }\mathbb R^m_{\geqslant}=\{(y_1,\ldots,y_m)\in\mathbb R^m\colon y_i\geqslant 0\;\forall i\}.
$$
Take an $((m-n)\times m)$-matrix $C$ such that $CA=0$ and ${\rm rank\,}C=m-n$. 
Let $\boldsymbol{c}_i=(c_{1,i},\dots,c_{m-n,i})$. Then
$$
j_P(P)=\{(y_1,\dots,y_m)\in \mathbb R^m_{\geqslant}\colon c_{i,1}y_1+\dots+c_{i,m}y_m=c_i, 
i=1,\dots,m-n\},
$$
where $c_i=c_{i,1}b_1+\dots+c_{i,m}b_m$. 

\begin{example}
For the standard simplex 
$$
\Delta^n=\{x_i\geqslant 0\text{ for } i=1,\dots,n,\quad\text{ and }-x_1-\dots-x_n+1\geqslant 0\}
$$
we have $C=(1,\dots,1)$, and  
$$
j_{\Delta^n}(\Delta^n)=\{(y_1,\dots, y_{n+1})\in\mathbb R^{n+1}_{\geqslant}\colon y_1+\dots+y_{n+1}=1\}
$$
is a {\it regular simplex}\index{regular simplex}\index{simplex!regular}.
\end{example}

\begin{definition}
A {\em moment-angle manifold} is the subset in $\mathbb{C}^m$ defined as 
$$
\mathcal{Z}_P=\rho^{-1}\circ j_P(P), \text{where }\rho(z_1, \ldots, z_m) = \big(|z_1|^2, \ldots, |z_m|^2\big).
$$ 
The action of $\mathbb T^m$ on $\mathbb {C}^m$ induces the action of $\mathbb T^m$ on $\mathcal{Z}_P$.

A {\em real moment-angle manifold}
is the set of real points in $\mathcal{Z}_P$, that is a subset in $\mathbb{R}^m$ defined as 
$$
\mathbb R\mathcal{Z}_P=\rho^{-1}\circ j_P(P), \text{where }\rho(x_1, \ldots, x_m) = \big(|x_1|^2, \ldots, |x_m|^2\big).
$$ 
The action of $\mathbb Z_2^m$ on $\mathbb {C}^m$ by changing the signs of the coordinates induces the action of 
$\mathbb Z_2^m$ on $\mathbb R\mathcal{Z}_P$.
\end{definition}

For the embeddings $j_{\mathcal{Z}_P}\colon(\mathcal{Z}_P,\mathbb{R}\mathcal{Z}_P)
\subset (\mathbb C^m,\mathbb R^m)$ and $j_P\colon P\subset\mathbb R^m_{\geqslant}$ we have the commutative diagram:
$$ 
\begin{CD}
(\mathcal{Z}_P,\mathbb R\mathcal{Z}_P) @ >{j_{\mathcal{Z}_P}}>> (\mathbb{C}^m,\mathbb R^m)\\
@ V{\rho_P}VV @ VV{\rho}V\\
(P,P) @>{j_P}>> (\mathbb{R}_{\geqslant}^m,\mathbb{R}_{\geqslant}^m)
\end{CD}
$$

\begin{example}
For the standard simplex $\Delta^n$ the moment angle manifolds are spheres:
\begin{gather*}
\mathcal{Z}_{\Delta^n}=\{(z_1,\dots,z_{n+1})\in\mathbb C^{n+1}\colon |z_1|^2+\dots+|z_{n+1}|^2=1\}\simeq S^{2n+1};\\
\mathbb R\mathcal{Z}_{\Delta^n}=\{(u_1,\dots,u_{n+1})\in\mathbb R^{n+1}\colon |u_1|^2+\dots+|u_{n+1}|^2=1\}\simeq S^n.
\end{gather*}
\end{example}

Let $z_j=u_j+iv_j$.
\begin{theorem}(see \cite[Theorem 6.1.5.]{BP15})
\begin{enumerate}
\item We have $\mathcal{Z}_P=\{\boldsymbol{z}\in\mathbb C^m\colon c_{i,1}|z_1|^2+\dots+c_{i,m}|z_m|^2=c_i, i=1,\dots,m-n\}$,\\
$\mathbb R\mathcal{Z}_P=\{\boldsymbol{u}\in\mathbb R^m\colon c_{i,1}|u_1|^2+\dots+c_{i,m}|u_m|^2=c_i, 
i=1,\dots,m-n\}$. 
\item Denote $\Phi_k(\boldsymbol{z})=c_{k,1}|z_1|^2+\dots+c_{k,m}|z_m|^2-c_k$. Then at each point 
$\boldsymbol{u}\in \mathbb R\mathcal{Z}_P$ the differentials $d\Phi_1$, $\dots$, $d\Phi_{m-n}$ 
are linearly independent. Thus, 
\begin{itemize}
\item $\mathbb R\mathcal{Z}_P$  is a smooth manifold with a fixed trivialisation 
of the normal bundle of the $\mathbb Z_2^m$-equivariant embedding $\mathbb R\mathcal{Z}_P \subset \mathbb{R}^m$, 
and 
\item $\mathcal{Z}_P$ is a smooth manifold with a fixed trivialisation of the normal bundle 
of the $\mathbb T^m$-equivariant embedding $\mathcal{Z}_P \subset \mathbb{C}^m$.
\end{itemize}
\end{enumerate}
\end{theorem}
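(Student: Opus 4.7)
The plan is to establish part (1) directly from the definitions, and then for part (2) to translate linear independence of the $d\Phi_k$ at a point $\boldsymbol{u}\in\mathbb{R}\mathcal{Z}_P$ into a statement about ranks of submatrices of $C$, which I will deduce from simplicity of $P$ via a duality between $A$ and $C$.

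For part (1), I would observe that $\rho(\boldsymbol{z})=(|z_1|^2,\ldots,|z_m|^2)$ sends $\mathbb{C}^m$ onto $\mathbb{R}^m_{\geqslant}$, so $\boldsymbol{z}\in\mathcal{Z}_P=\rho^{-1}(j_P(P))$ iff $(|z_1|^2,\ldots,|z_m|^2)$ lies in $j_P(P)$. Since $j_P(P)$ is cut out inside $\mathbb{R}^m_{\geqslant}$ by the equations $c_{k,1}y_1+\cdots+c_{k,m}y_m=c_k$, and since the inequalities $|z_j|^2\geqslant 0$ hold automatically, this gives the quadratic description. The real case is identical with $|u_j|^2=u_j^2$.

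For part (2), the differential is $d\Phi_k=2\sum_{j}c_{k,j}u_j\,du_j$, so linear independence of $d\Phi_1,\ldots,d\Phi_{m-n}$ at $\boldsymbol{u}$ is equivalent to the $(m-n)\times m$ matrix $CU$ (with $U=\mathrm{diag}(u_1,\ldots,u_m)$) having full row rank. Writing $I=\{j:u_j=0\}$ and $J=\{1,\ldots,m\}\setminus I$, this reduces to $\mathrm{rank}\,C_J=m-n$, where $C_J$ is the submatrix of $C$ formed by the columns indexed by $J$. Now the point $\boldsymbol{x}=j_P^{-1}\rho(\boldsymbol{u})\in P$ satisfies $\boldsymbol{a}_j\boldsymbol{x}+b_j=0$ precisely for $j\in I$, so it lies on the face $\bigcap_{j\in I}F_j$; since $P$ is \emph{simple}, the normals $\{\boldsymbol{a}_j:j\in I\}$ are linearly independent, i.e.\ the submatrix $A_I$ of $A$ has full rank $|I|$. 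The linear algebra I would insert next is the duality
$$
\mathrm{rank}\,A_I+\mathrm{rank}\,C_J=|I|+(m-n),
$$
valid for any partition $\{1,\ldots,m\}=I\sqcup J$. This follows by writing both sides in terms of $d:=\dim(\mathrm{col}(A)\cap V_I)$, where $V_I=\{v\in\mathbb{R}^m:v_j=0\text{ for }j\in I\}$: since $A_I$ equals the composition of $A$ with the projection onto the $I$-coordinates one gets $\mathrm{rank}\,A_I=n-d$, and using $\ker C=\mathrm{col}(A)$ together with $V_I=\mathbb{R}^J\subset\mathbb{R}^m$ one gets $\mathrm{rank}\,C_J=|J|-d$. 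Adding gives the identity, and with $\mathrm{rank}\,A_I=|I|$ this yields $\mathrm{rank}\,C_J=m-n$.

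The remaining conclusions are standard: by the implicit function theorem $\mathbb{R}\mathcal{Z}_P$ is a smooth submanifold of codimension $m-n$, and the gradients $\nabla\Phi_1,\ldots,\nabla\Phi_{m-n}$ trivialize its normal bundle; this trivialization is $\mathbb{Z}_2^m$-equivariant because each $\Phi_k$ is invariant under sign changes of coordinates. For $\mathcal{Z}_P\subset\mathbb{C}^m$ the same argument applies verbatim with $d\Phi_k=2\sum_j c_{k,j}(u_j\,du_j+v_j\,dv_j)$: the set $I=\{j:z_j=0\}$ again corresponds to a face of $P$, the duality above again gives linear independence, and the real gradients $\nabla\Phi_k$ provide a $\mathbb{T}^m$-equivariant trivialization of the normal bundle. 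I expect the only non-mechanical step to be the rank duality lemma, which is essentially the observation that $\mathrm{col}(A)$ and $\mathrm{row}(C)$ are orthogonal complements in $\mathbb{R}^m$; once it is in place, both the smoothness and the normal-bundle triviality follow automatically.
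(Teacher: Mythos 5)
The paper itself gives no proof of this theorem (it is quoted from \cite[Theorem~6.1.5]{BP15}), and your argument is essentially the standard one: reduce linear independence of $d\Phi_1,\dots,d\Phi_{m-n}$ at a point with zero set $I$ to $\mathrm{rank}\,C_J=m-n$, and deduce that from simplicity of $P$ via the fact that $\mathrm{col}(A)$ and the row space of $C$ are orthogonal complements in $\mathbb R^m$. One correction: your ``rank duality'' $\mathrm{rank}\,A_I+\mathrm{rank}\,C_J=|I|+(m-n)$ is \emph{not} valid for an arbitrary partition $[m]=I\sqcup J$. Your own two (correct) formulas $\mathrm{rank}\,A_I=n-d$ and $\mathrm{rank}\,C_J=|J|-d$ add up to $n+|J|-2d$, while $|I|+(m-n)=2|I|+|J|-n$; these coincide exactly when $d=n-|I|$, i.e.\ exactly when $\mathrm{rank}\,A_I=|I|$. (Linear-algebra counterexample: $m=4$, $n=2$, $A$ with rows $(1,0),(0,1),(1,0),(0,1)$, $C$ with rows $(1,0,-1,0),(0,1,0,-1)$, $I=\{1,3\}$: then $\mathrm{rank}\,A_I=\mathrm{rank}\,C_J=1$, but $|I|+(m-n)=4$.) The universally valid relation is the difference $\mathrm{rank}\,C_J-\mathrm{rank}\,A_I=(m-n)-|I|$. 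This slip does not damage the proof: in the only case you use, simplicity (together with irredundancy of the presentation, the standing assumption here) gives $\mathrm{rank}\,A_I=|I|$, hence $d=n-|I|$ and $\mathrm{rank}\,C_J=|J|-d=m-n$ directly from your second formula. The rest --- part~(1), the identical computation in the complex case with $d\Phi_k=2\sum_j c_{k,j}(u_j\,du_j+v_j\,dv_j)$, and the observation that the gradients of the invariant functions $\Phi_k$ give an equivariant framing of the normal bundle --- is correct as written.
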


\begin{example}[A smooth atlas on a moment-angle manifold] \index{moment-angle manifold!smooth atlas}We can explicitly describe a smooth atlas on the manifold $\mathbb R\mathcal{Z}_P$ or 
$\mathcal{Z}_P$. For each vertex $v=F_{i_1}\cap \dots\cap F_{i_n}$, $i_1<\dots<i_n$, of~$P$ set 
$\omega_v=\{i_1,\dots, i_n\}$ and 
$$
P_v=P\setminus\bigcup\limits_{j\notin \omega_v}F_j.
$$
The set $P_v$ is homeomorphic to an open convex set in $\mathbb R^n_{\geqslant}$ with coordinates
$y_i=\boldsymbol{a}_i\boldsymbol{x}+b_i$, $i\in\omega_v$. This set is defined by linear inequalities 
$$
\boldsymbol{a}_jA_v^{-1}(\boldsymbol{y}_v-\boldsymbol{b}_v)+b_j=\widetilde{\boldsymbol{a}}^v_j\boldsymbol{y}_v+
\widetilde{b}^v_j> 0, j\notin \omega_v,
$$
where $\boldsymbol{y}_v=\begin{pmatrix}y_{i_1}\\ 
\dots\\
y_{i_n}
\end{pmatrix}$, $A_v=\begin{pmatrix}a_{i_1,1}&\dots&a_{i_1,n}\\ 
\dots\\
a_{i_n,1}&\dots&a_{i_n,n}
\end{pmatrix}$, $\boldsymbol{b}_v=\begin{pmatrix}b_{i_1}\\ 
\dots\\
b_{i_n}
\end{pmatrix}$, and $\boldsymbol{y}_v=A_v\boldsymbol{x}+\boldsymbol{b}_v$.
There is a homeomorphism  $P_v\simeq\mathbb R^n_{\geqslant}$.

The preimage $U_v=\rho_P^{-1}(P_v)$ in $\mathcal{Z}_P$ is an open subset 
defined by the inequalities $z_j\ne 0$, $j\notin \{i_1,\dots, i_n\}$. Moreover, 
$|z_j|^2=\widetilde{a}^v_{j,1}|z_{i_1}|^2+\dots + \widetilde{a}^v_{j,n}|z_{i_n}|^2+\widetilde{b}^v_j$. Hence, $U_v\simeq S_v\times\mathbb T^{m-n}$, 
where 
\begin{equation}\label{def:Sv}
S_v=\{(z_{i_1},\dots, z_{i_n})\in\mathbb C^n\colon  \widetilde{a}^v_{j,1}|z_{i_1}|^2+\dots + \widetilde{a}^v_{j,n}|z_{i_n}|^2+\widetilde{b}^v_j> 0, j\notin\omega_v\}
\end{equation}
 This is an~open set in $\mathbb C^n$ containing $\boldsymbol{0}$. Moreover,  
$S_v\simeq \mathbb R^n_{\geqslant}\times\mathbb T^n/\sim\simeq \mathbb C^n$.
For $z_j\ne 0$ set $\varphi_j={\rm arg}(z_j)$. Then the functions 
$$
(z_{i_1},\dots, z_{i_n}, \varphi_1,\dots,\widehat{\varphi_{i_1}},\dots,\widehat{\varphi_{i_n}},\dots,\varphi_m)
$$ 
are coordinates on $U_v$.

For another vertex $w=F_{j_1}\cap\dots\cap F_{j_n}$ we have the coordinates 
$$
(z_{j_1},\dots, z_{j_n}, \varphi_1,\dots,\widehat{\varphi_{j_1}},\dots,\widehat{\varphi_{j_n}},\dots,\varphi_m)
$$
and the transition functions on $U_v\cap U_w$ are
\begin{gather*}
z_{j_s}=\begin{cases} z_{j_s},&\text{ if }j_s\in\omega_v;\\
\sqrt{\widetilde{a}^v_{j_s,1}|z_{i_1}|^2+\dots + \widetilde{a}^v_{j_s,n}|z_{i_n}|^2+\widetilde{b}^v_{j_s}}e^{i\varphi_{j_s}},
&j_s\notin\omega_v;
\end{cases}\\
\varphi_j=\begin{cases}
{\rm arg}(z_j),&\text{ if }j\in\omega_v;\\
\varphi_j,&\text{ if } j_s\notin\omega_v.
\end{cases}
\end{gather*}
The preimage $W_v=\rho_P^{-1}(P_v)$ in $\mathbb R\mathcal{Z}_P$ is an open subset 
defined by the inequalities $u_j\ne 0$, $j\notin\omega_v$. Moreover, 
$u_j^2=\widetilde{a}^v_{j,1}u_{i_1}^2+\dots + \widetilde{a}^v_{j,n}u_{i_n}^2+\widetilde{b}^v_j$. Hence, 
$\rho_P^{-1}(P_v)\simeq \mathbb RS_v\times \mathbb Z_2^{m-n}$, 
where 
$$
\mathbb R S_v=\{(u_{i_1},\dots, u_{i_n})\in\mathbb C^n\colon  \widetilde{a}^v_{j,1}u_{i_1}^2+\dots + \widetilde{a}^v_{j,n}
u_{i_n}^2+\widetilde{b}^v_j> 0, j\notin\omega_v\}
$$ 
This is an~open set in $\mathbb R^n$ containing $\boldsymbol{0}$. Moreover,  
$\mathbb R S_v\simeq \mathbb R^n_{\geqslant}\times\mathbb Z_2^n/\sim\simeq \mathbb R^n$.

For a given choice of signs $\varepsilon_j=\pm 1$, $j\notin \omega_v$
the functions $(u_{i_1},\dots, u_{i_n})$ are coordinates on the open set 
$$
W_{v,\varepsilon}=\rho_P^{-1}(P_v)\cap \{{\rm sign}(u_j)=\varepsilon_j\colon j\notin \omega_v\}\simeq \mathbb{R}S_v.
$$

For another vertex $w=F_{j_1}\cap\dots\cap F_{j_n}$ and signs $\{\eta_j\}$ there are the coordinates 
$(u_{j_1},\dots, u_{j_n})$ on $W_{w,\eta}$. We have $W_{v,\varepsilon}\cap W_{w,\eta}\ne\varnothing$
if and only if $\varepsilon_j=\eta_j$ for all $j\notin \omega_v\cup\omega_w$.
The transition functions are
$$
u_{j_s}=\begin{cases} u_{j_s},&\text{ if }j_s\in\omega_v;\\
\varepsilon_{j_s}\sqrt{\widetilde{a}^v_{j_s,1}u_{i_1}^2+\dots + \widetilde{a}^v_{j_s,n}u_{i_n}^2+\widetilde{b}^v_{j_s}},
&\text{ if }j_s\notin\omega_v.
\end{cases}
$$
\end{example}
\section{Quadratic and linear smooth structures on the polytope as a manifold with corners}
\begin{definition}[A manifold with corners] (see \cite[Definition 7.1.3]{BP15}, \cite{J12})
{\it A~manifold with corners}\index{manifold with corners} (of dimension $n$) is a topological
manifold $Q$ with boundary together with an atlas $\{U_i,\varphi_i\}$ consisting of homeomorphisms
$\varphi_i\colon U_i\to W_i$ onto open subsets $W_i\subset \mathbb R^n_{\geqslant}$
such that the mapping $\varphi_i\circ\varphi_j^{-1}\colon \varphi_j(U_i\cap U_j)\to \varphi_i(U_i\cap U_j)$
is smooth for all $i,j$. (A mapping between open
subsets in $\mathbb R^n_{\geqslant}$ is called smooth if can be obtained by restriction 
of a~smooth mapping of open subsets in $\mathbb R^n$.)
\end{definition}
\begin{example}[A quadratic smooth structure on a polytope]\label{ex:smpind}
\index{polytope!smooth structure as a manifold with corners}
The smooth atlases on $\mathcal{Z}_P$ and $\mathbb R\mathcal{Z}_P$ 
induce a smooth atlas on $P$ as a manifold with corners.
Its charts are $P_v$ and coordinates in $P_v$ are $u_j=\sqrt{\boldsymbol{a}_j\boldsymbol{x}+b_j}$, $j\in \omega_v$.
Then for two vertices $v$ and $w$ we have 
$$
u_{j_s}=\begin{cases} u_{j_s},&\text{ if }j_s\in\omega_v;\\
\sqrt{\widetilde{a}^v_{j_s,1}u_{i_1}^2+\dots + \widetilde{a}^v_{j_s,n}u_{i_n}^2+\widetilde{b}^v_{j_s}},
&\text{ if }j_s\notin\omega_v.
\end{cases}
$$
\end{example}
\begin{example}[A linear smooth structure on a polytope]\label{ex:smpnat}
On the other hand, there is natural smooth structure on $P$ as a manifold with corners.
Its charts are again $P_v$, and coordinates in $P_v$ are $y_j=\boldsymbol{a}_j\boldsymbol{x}+b_j$, $j\in \omega_v$.
Then for two vertices $v$ and $w$ we have 
$$
y_{j_s}=\begin{cases} y_{j_s},&\text{ if }j_s\in\omega_v;\\
\widetilde{a}^v_{j_s,1}y_{i_1}+\dots + \widetilde{a}^v_{j_s,n}y_{i_n}+\widetilde{b}^v_{j_s},
&\text{ if }j_s\notin\omega_v.
\end{cases}
$$
\end{example}
\begin{remark}
It was proved by M.~Wiemeler in~\cite{W13} (see also \cite{D14}) that if two simple polytopes $P$ and $Q$ 
are combinatorially equivalent, then they are diffeomorphic as manifolds with corners with linear smooth structures.
\end{remark}

\begin{proposition}\label{prop:lqss}
The polytope $P$ with linear smooth structure is diffeomorphic to $P$ 
with quadratic smooth structure.
\end{proposition}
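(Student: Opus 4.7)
My plan is to construct a homeomorphism $F\colon P\to P$ realizing a diffeomorphism from the linear to the quadratic smooth structure on $P$. The correct local model is dictated by the two coordinate systems: on a vertex chart $P_v$ with linear coordinates $(y_{i_1},\dots,y_{i_n})$, the componentwise squaring $(y_{i_1},\dots,y_{i_n})\mapsto(y_{i_1}^2,\dots,y_{i_n}^2)$ has in the quadratic target coordinates $u_{i_k}=\sqrt{y_{i_k}}$ the expression $u_{i_k}=\sqrt{y_{i_k}^2}=y_{i_k}$, which is the identity in source-linear/target-quadratic coordinates and hence a smooth bijection with smooth inverse. Moreover, on the interior of $P$ the two atlases already coincide (since $u=\sqrt y$ is smooth for $y>0$), so the only issue is arranging this local form near the boundary.

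To globalize, I propose to build $F$ as a composition $F=G_1\circ G_2\circ\cdots\circ G_m$ of one self-diffeomorphism of $P$ (in the linear smooth structure) per facet. For each facet $F_j$, fix a smooth monotone diffeomorphism $\lambda_j\colon[0,\infty)\to[0,\infty)$ with $\lambda_j(t)=t^2$ on $[0,\epsilon_j]$ and $\lambda_j(t)=t$ for $t\geqslant\delta_j$, where $\delta_j<\min\{y_j(v)\colon v\text{ a vertex of }P,\ v\notin F_j\}$, and set
\[
G_j(\boldsymbol x)=\boldsymbol x+\bigl(\lambda_j(y_j(\boldsymbol x))-y_j(\boldsymbol x)\bigr)\,\boldsymbol w_j(\boldsymbol x),
\]
where $\boldsymbol w_j\colon P\to\mathbb R^n$ is smooth, satisfies $\boldsymbol a_j\boldsymbol w_j\equiv 1$, and agrees on a small neighborhood of each vertex $v\in F_j$ with the column $A_v^{-1}e_{k(v,j)}$ ($k(v,j)$ being the position of $j$ in $\omega_v$). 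Such $\boldsymbol w_j$ is produced by interpolating the vertex values with a partition of unity on $F_j$. Then $G_j$ is the identity outside the collar $\{y_j<\delta_j\}$, and near each $v\in F_j$ its $v$-chart expression is $(y_{i_1},\dots,y_{i_n})\mapsto(y_{i_1},\dots,\lambda_j(y_{i_k}),\dots,y_{i_n})$ with $j=i_k$, because the dual-basis relation gives $\boldsymbol a_{i_l}(A_v^{-1}e_{k(v,j)})=\delta_{l,k(v,j)}$. Consequently, near any vertex $v=F_{i_1}\cap\cdots\cap F_{i_n}$, only the factors $G_{i_1},\dots,G_{i_n}$ act non-trivially; they modify disjoint $v$-coordinates, commute, and compose to componentwise squaring — the desired local form.

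The main obstacle is constructing $\boldsymbol w_j$ so that $G_j$ actually preserves $P$. Smoothness of $G_j$ is automatic, and the inverse function theorem makes it a local diffeomorphism once $\delta_j$ is small enough that $G_j$ is a small perturbation of the identity. The requirement $y_l(G_j(\boldsymbol x))\geqslant 0$ for $l\neq j$ demands, in general, $\boldsymbol a_l\boldsymbol w_j=0$ on a neighborhood of every face $F_j\cap F_l$ in $F_j$ — not merely near vertices — so that $y_l\circ G_j=y_l$ there. This extra vertex-compatible constraint on $\boldsymbol w_j$ can be met by choosing the partition of unity on $F_j$ subordinate to its face stratification; verifying the details and checking that the resulting global map $F$ is a well-defined diffeomorphism is where the bulk of the technical work lies.
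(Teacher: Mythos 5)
Your construction takes a genuinely different route from the paper's. The paper appeals to Gaifullin's existence of a \emph{system of consistent collars}, then builds one global map $F$ which in collar coordinates $(z,t_1,\dots,t_k)$ near any face applies a single resquaring function $\xi$ to each collar parameter $t_i$. You instead factor the diffeomorphism as a product $G_1\circ\cdots\circ G_m$ of facet-wise modifications, each $G_j$ determined by a choice of inward vector field $\boldsymbol w_j$ near $F_j$, and verify the local normal form near a vertex by commuting the relevant factors. The two are close in spirit (both squash only the normal directions and leave the tangential ones alone near the boundary), but your decomposition replaces the citation of consistent collars with an explicit construction of the $\boldsymbol w_j$'s, which is essentially equivalent work: the compatibility you need on $\boldsymbol w_j$ is the same face-stratified interpolation that underlies the collar construction.

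Two points in your sketch deserve sharpening, since they are precisely where the hidden difficulty lies. First, the condition you impose, that $\boldsymbol a_l\boldsymbol w_j=0$ on a neighborhood of $F_j\cap F_l$ \emph{in $F_j$}, is not quite what is forced. For $G_j$ to carry the facet $F_l$ into itself one needs $\boldsymbol a_l\boldsymbol w_j=0$ on $F_l\cap\{y_j<\epsilon_j\}$, and for the composition argument to produce the clean identity chart near a vertex one needs these vanishing conditions on full open neighborhoods \emph{in $P$}, not merely inside a facet; otherwise a factor $G_{j'}$ with $j'\neq j$ may perturb the $y_j$-coordinate and spoil the form $u_j=\sqrt{\lambda_j(y_j)}=y_j$. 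The perturbation is still harmless — since $\boldsymbol a_j\boldsymbol w_{j'}$ vanishes on $F_j$, one gets $y_j\circ G_{j'}=y_j\cdot(\text{smooth positive factor})$, so $\sqrt{y_j\circ F}$ remains smooth — but this needs to be said; the claim that the $G_{i_k}$'s ``modify disjoint $v$-coordinates and commute'' near a vertex requires the $\boldsymbol w_{i_k}$'s to equal $A_v^{-1}e_{k}$ on an open neighborhood of $v$ and the iterated images to stay inside that neighborhood. Second, the constraints on $\boldsymbol w_j$ (vertex values, vanishing of $\boldsymbol a_l\boldsymbol w_j$ along $F_j\cap F_l$, and the correct radius of validity relative to $\epsilon_j,\delta_j$) are exactly an induction over the face poset, not a one-step partition of unity; at an interior point of a face $G\subset F_j$ of codimension $k$, $\boldsymbol w_j$ must lie in the $(n-k)$-dimensional affine slice cut out by $\boldsymbol a_j\boldsymbol w=1$ and $\boldsymbol a_l\boldsymbol w=0$ for the other facets through $G$, and one must check (as you can, using $\boldsymbol a_{i_{k'}}A_v^{-1}e_k=\delta_{k'k}$) that the prescribed vertex values of $G$ lie in that slice so the interpolation stays inside it. Carrying this out in full is where the proposal still has real work to do — and it is precisely the content of Gaifullin's lemma that the paper invokes instead.
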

We will give a sketch the proof suggested to the authors by A.A.~Gaifullin.
\begin{proof}[Sketch of a proof of Proposition \ref{prop:lqss}] 
\begin{definition}[\cite{G08}]
A {\it collar}\index{collar of a facet}\index{polytope!collar of a facet} 
of a facet $F_i$ of a simple convex polytope $P\subset \mathbb R^n$ is a smooth embedding 
$\varphi\colon F_i\times [0,\delta)\to P^n$ such that
\begin{enumerate}
\item $\varphi\left.\right|_{F_i\times 0}={\rm id}_{F_i}$ -- the identity mapping;
\item the image of $\varphi$ contains an open neighbourhood of $F_i$ in $P$ (in the topology induced from $\mathbb R^n$);
\item $\varphi((G\cap F_i)\times [0,\delta))\subset G$ for any face $G$ of $P$ such that $G\not\subset F_i$ 
but $G\cap F_i\ne\varnothing$.
\end{enumerate} 

For $0<\delta'<\delta$ the collar $\varphi$ can be restricted  to a collar $\varphi\left.\right|_{F_i\times[0,\delta')}$. 

For any face $G$ such that $G\not\subset F_i$ and $G\cap F_i\ne\varnothing$ 
the~embedding $\varphi_G=\varphi\left.\right|_{(G\cap F_i)\times[0,\delta)}$ is called 
an~{\it induced collar}\index{induced collar}\index{polytope!induced collar} .

For two different facets $F_i$ and $F_j$ with $F_i\cap F_j\ne\varnothing$ the corresponding collars
$\varphi_i$ and $\varphi_j$ are  {\it consistent} \index{consistent collars}\index{polytope!consistent collars} 
if the diagram
$$
\begin{CD}
  (F_i\cap F_j)\times [0,\delta)\times[0,\delta)@>(\varphi_i)_{F_j}\times {\rm id}>>F_j\times [0,\delta)@>\varphi_j>>P\\
  @V{{\rm id}\times T}VV@.@V{\rm id}VV\\
  (F_i\cap F_j)\times [0,\delta)\times[0,\delta)@>(\varphi_j)_{F_i}\times {\rm id}>> F_i\times[0,\delta)@>\varphi_i>>P
\end{CD} 
$$
where $T(t_1,t_2)=(t_2,t_1)$, is commutative (perhaps, after a restriction to a smaller half interval $[0,\delta')$). 
By definition collars of disjoint facets are consistent.

A {\it system of consistent collars}\index{system of consistent collars}\index{polytope!system of consistent collars} 
is a set of collars $\{\varphi_1,\dots,\varphi_m\}$ of all facets such that
any two collars are consistent.  
\end{definition}
\begin{proposition}[\cite{G08}]
Any simple polytope $P$ admits a system of consistent collars.
\end{proposition}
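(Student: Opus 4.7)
The plan is to construct, for each facet $F_i$, an inward-pointing vector field $\xi_i$ defined in a one-sided neighborhood of $F_i$ in $P$, and to obtain the collar $\varphi_i$ as the time-$t$ flow of $\xi_i$. The consistency diagram then reduces to the infinitesimal commutation relation $[\xi_i,\xi_j]=0$ on overlaps, because pairwise commuting complete vector fields have mutually commuting flows, and this commutation is exactly the content of the required commutative square.

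\textbf{Local models at vertices.} At each vertex $v=F_{i_1}\cap\cdots\cap F_{i_n}$ I would use the affine coordinates $y_j=\boldsymbol{a}_j\boldsymbol{x}+b_j$ introduced in Section~\ref{sec:ma}, which identify a neighborhood $P_v\subset P$ of $v$ with an open subset of $\mathbb R^n_{\geqslant}$ having the facets $F_{i_j}\cap P_v$ as coordinate hyperplanes. In such a chart, the natural candidates for the restriction of the $\xi_{i_j}$ are the coordinate vector fields $\partial/\partial y_{i_j}$; these commute pairwise and their flows realize model collars of the $F_{i_j}$ satisfying the consistency diagram locally near $v$.

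\textbf{Gluing and the main obstacle.} Covering $P$ by the vertex neighborhoods $P_v$ (plus an interior open set disjoint from $\partial P$) and choosing a subordinate partition of unity $\{\chi_v\}$, one is tempted to set $\xi_i=\sum_{v\ni F_i}\chi_v\,\partial/\partial y_i^{(v)}$. The obstacle, which is the real content of the proof, is that a convex combination of families of commuting vector fields is not in general a commuting family: transition functions between distinct vertex charts do not preserve the joint product decomposition, so $[\xi_i,\xi_j]\neq 0$ in overlap regions. Consequently the naive partition-of-unity construction destroys the consistency condition even though each individual summand is in product form.

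\textbf{Induction over skeleta.} To overcome this, I would proceed by induction on the dimension $k$ of faces, building compatible smooth local product charts along the $k$-skeleton of $P$. At $k=0$ the vertex charts already give commuting model fields. Assuming consistent collars have been built in a neighborhood of the $(k-1)$-skeleton, one extends to a neighborhood of each $k$-face $G=F_{j_1}\cap\cdots\cap F_{j_{n-k}}$ by invoking the collar-neighborhood theorem for $G$ as a manifold with corners of lower complexity, adjusting the vertex charts by a diffeomorphism of $\mathbb R^n_{\geqslant}$ that preserves the germs along lower-dimensional strata but straightens the transversal structure along $G$. This produces charts in which all $\xi_{j_s}$ are coordinate vector fields in the $n-k$ normal directions to $G$, so they commute on a neighborhood of $G$. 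After $n-1$ steps, the resulting vector fields pairwise commute wherever they are jointly defined, and the associated flows give the required system of consistent collars $\{\varphi_1,\ldots,\varphi_m\}$, possibly after shrinking $\delta$. The key difficulty throughout is the inductive extension step, which turns the partition-of-unity construction into a genuine straightening of corners compatible with all facet incidences.
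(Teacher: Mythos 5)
The paper does not prove this proposition; it cites it from Gaifullin's thesis \cite{G08}, so there is no in-paper argument to compare against. Your high-level strategy --- realize each collar as the flow of an inward-pointing vector field $\xi_i$, translate consistency into the commutation relations $[\xi_i,\xi_j]=0$, observe that partitions of unity do not preserve commutativity, and build a globally commuting family by induction along the face stratification --- is the standard route and is sound in outline. The gap is that the inductive extension step, which you correctly flag as the key difficulty, is left as a black box: extending a consistent product structure from a neighborhood of the $(k-1)$-skeleton to a neighborhood of the relative interior of a $k$-face $G$ requires a relative uniqueness-of-tubular-neighborhoods (isotopy extension) statement for manifolds with corners, asserting that two local product structures transverse to $G$ that agree near $\partial G$ are conjugate by a diffeomorphism supported away from $\partial G$. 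That is the genuine technical content of the proposition and must be stated and verified, not absorbed into the phrase ``adjusting the vertex charts by a diffeomorphism.'' You should also record explicitly that each $\xi_i$ must remain tangent to every $F_j$ with $j\ne i$ wherever both are defined --- this is forced by condition (3) of the collar definition and is automatic for the coordinate fields in a vertex chart, but you need to check that the straightening diffeomorphisms in your induction preserve this tangency.
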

Given a system of consistent collars one can build a desired diffeomorphism as follows.
Take a diffeomorphism $\xi\colon [0,\infty)\to [0,\infty)$ such that 
$$
\xi(t)=\begin{cases} t^2&\text{ if }t<\frac{\delta}{10};\\
t&\text{ if }t>\frac{\delta}{2}.
\end{cases}
$$
Choosing small enough $\delta$ we can assume that if the images of any set of collars intersect
then the corresponding facets have a nonempty common intersection.
Now let a point $p\in P$ belong to the images of exactly $k$ collars $\varphi_{i_1}$, $\dots$, $\varphi_{i_k}$.
Then $G=F_{i_1}\cap\dots\cap F_{i_k}\ne\varnothing$. Consider the mapping
$\varphi_{i_1,\dots, i_k}\colon G\times [0,\delta)^k\to P$ defined as 
$$
\varphi_{i_1,\dots, i_k}=\varphi_{i_1}\circ\left((\varphi_{i_2})_{F_{i_1}}\times {\rm id}_{[0,\delta)}\right)
\circ\dots\circ\left((\varphi_{i_k})_{F_{i_1}\cap\dots\cap F_{i_{k-1}}}\times{\rm id}_{[0,\delta)^{k-1}}\right).
$$
Since the collars are consistent, the mapping $\varphi_{i_1,\dots, i_k}$ does not depend on the
order of the facets, that is 
$$
\varphi_{i_1,\dots, i_k}(\bs{x},t_k,\dots, t_1)=\varphi_{i_{\sigma(1)},\dots, i_{\sigma(k)}}(\boldsymbol{x}, 
t_{\sigma(k)},\dots, t_{\sigma(1)})
$$
for any permutation $\sigma$ of the set $\{1,\dots, k\}$.
Then 
$$p\in U_G=\varphi_{i_1,\dots, i_k}({\rm int} (G)\times [0,\delta)^k),
$$
for otherwise, 
$$
p\in \varphi_{i_1,\dots, i_k}((G\cap F_{i_{k+1}})\times [0,\delta)^k)\subset F_{i_{k+1}}\subset 
\varphi_{i_{k+1}}(F_{i_{k+1}}\times[0,\delta)).
$$ 
Then for any local coordinates $(z_1,\dots, z_{n-k})$ in ${\rm int}(G)$  
the functions $(z_1,\dots z_{n-k}, t_1,\dots, t_k)$ are local coordinates in the open set $U_G$. Define 
$$
F(z_1,\dots, z_{n-k}, t_1,\dots, t_k)=(z_1,\dots, z_{n-k},\xi(t_1),\dots, \xi(t_k)).
$$
It can be shown that $F$ is a~correctly defined mapping on $P$ and it is a diffeomorphism
of $P$ with linear smooth structure to $P$ with quadratic smooth structure.  
\end{proof}
%\begin{construction}[A  canonical section] The projection $\rho$ has the canonical section 
%$$
%s\colon\mathbb R^m_{\geqslant}\to \mathbb{C}^m,\qquad s(x_1,\dots,x_m)=(\sqrt{x_1},\dots,\sqrt{x_m}),
%$$
%which gives a {\em canonical section}\index{moment-angle manifold!canonical section} 
%$s_P\colon P\to \mathbb R\mathcal{Z}_P\subset \mathcal{Z}_P$  by the formula  $\widehat{s_P}=s\circ j_P$. 
%\end{construction}

\section{Uniqueness of equivariant smooth structures}
It was proved by F.~Bosio and L.~Meersseman in \cite[Theorem 4.1 and Corollary 4.7]{BM06} 
that if the geometric polytopes $P$ and $Q$ are combinatorially
equivalent, then the manifolds $\mathcal{Z}_P$  and $\mathcal{Z}_Q$ are $\mathbb T^m$-equivariantly
diffeomorphic. Thus, the defined equivariant smooth structure on the moment-angle complex $\mathcal{Z}_P$ does not
depend on~the~geometric realization~of~$P$.  

For real moment-angle manifolds the analogous argument
can not be followed literally due to the following reason. The proof uses \cite[Lemma 3.4]{BM06}: any diffeomorphism
$f$ of the sphere $S^{2a-1}=\{(z_1,\dots, z_a)\in \mathbb C^a\colon |z_1|^2+\dots+|z_a|^2=1\}$ equivariant with respect to
the standard action of $\mathbb T^a$ on $\mathbb C^a$ is equivariantly isotopic to identity. The analogous statement 
for the sphere $S^{a-1}=\{(w_1,\dots, w_a)\in\mathbb R^a\colon |w_1|^2+\dots+|w_a|^2=1\}$  equivariant with respect to
the standard action of~$\mathbb Z_2^a$ on $\mathbb R^a$ by changing signs of coordinates is not valid. The counterexample
is given for odd $a$ by the mapping $(w_1,\dots,w_a)\to(-w_1,\dots,-w_a)$. It has degree $-1$ and is not isotopic to identity.
We will build a $\mathbb Z_2^m$-equivariant diffeomorphism 
$\widehat{f}\colon \mathbb R\mathcal{Z}_P\to \mathbb R\mathcal{Z}_Q$ using the corresponding $\mathbb T^m$-equivariant 
diffeomorphism $f\colon \mathcal{Z}_P\to\mathcal{Z}_Q$

\begin{theorem}\label{RPth}
Let $f\colon\mathcal{Z}_P\to \mathcal{Z}_Q$ be 
a~$\mathbb T^m$-equivariant diffeomorphism. Then 
\begin{enumerate}
\item $f(R_1e^{i\varphi_1},\dots,R_me^{i\varphi_m})$ has the form 
$$
(e^{i\varphi_1}g_1(R_1,\dots, R_m)e^{i\psi_1(R_1,\dots, R_m)},\dots,e^{i\varphi_m}g_m(R_1,\dots, R_m)e^{i\psi_m(R_1,\dots, R_m)}),
$$
where $g_k=0$ if $R_k=0$, and  $g_k(R_1,\dots, R_m)\geqslant 0$, $\psi_k(R_1,\dots, R_m)$ are continuous functions on~$P$
considered as the image of the section $s_P\colon P\to \mathbb R\mathcal{Z}_P\subset 
\mathcal{Z}_P\subset\mathbb C^m$: 
$$
s_P(\boldsymbol{x})=(\sqrt{\boldsymbol{a}_1\boldsymbol{x}+b_1},\dots,\sqrt{\boldsymbol{a}_m\boldsymbol{x}+b_m})
$$
\item The mapping 
$$
g(R_1,\dots, R_m)=(g_1(R_1,\dots, R_m), \dots, g_m(R_1,\dots, R_m))
$$
is a diffeomorphism of smoothness at least $C^1$
between $P$ and $Q$ with quadratic smooth structures (Example~\ref{ex:smpind}). 
Moreover, it maps facets to facets and in local 
coordinates $R_{i_1},\dots, R_{i_n}$ at each vertex $v=F_{i_1}\cap\dots\cap F_{i_n}$ for functions
$\wt{g}_{i_s}(R_{i_1},\dots, R_{i_n})=g_{i_s}(R_1,\dots, R_m)$ we have $\frac{\partial \wt{g}_{i_s}}{\partial R_{i_t}}=0$
if $t\ne s$ and $R_{i_t}=0$.
\item The mapping
$$
\widehat{f}(u_1,\dots,u_m)=({\rm sign}(u_1)g_1(|u_1|,\dots, |u_m|),\dots, {\rm sign}(u_m)g_m(|u_1|,\dots, |u_m|))
$$
is a $\mathbb Z_2^m$-equivariant diffeomorphism $\mathbb R\mathcal{Z}_P\to \mathbb R\mathcal{Z}_Q$
of smoothness at~least~$C^1$.
\end{enumerate}
\end{theorem}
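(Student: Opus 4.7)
The plan is to extract from $\mathbb{T}^m$-equivariance a multiplicative form for each coordinate of $f$, verify non-vanishing via the diffeomorphism property, and then construct $g$ and $\widehat{f}$ directly from the resulting structure. The equivariance relation $f_k(t\cdot z) = t_k f_k(z)$ restricted to the subtorus of coordinates $j\ne k$ shows that $f_k$ is invariant in those coordinates, so it depends only on $z_k$ and on $|z_j|^2$ for $j\ne k$; the remaining $S^1$-equivariance in the $k$-th factor then forces
\[
f_k(z) = z_k\,H_k(|z_1|^2,\ldots,|z_m|^2)
\]
for a complex-valued function $H_k$, which is smooth in $y=|z|^2$ by Schwarz's theorem on smooth invariants of compact group actions. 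Setting $R_k=|z_k|$, $\varphi_k=\arg z_k$, $g_k(R)=R_k|H_k(R^2)|$ and $\psi_k(R)=\arg H_k(R^2)$, this gives part (1) once $H_k$ is shown to be nonvanishing.

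The key point is the non-vanishing of $H_k$ on all of $P$. At $z^0\in\mathcal{Z}_P$ with $z_k^0=0$, the tangent space $T_{z^0}\mathcal{Z}_P$ contains a free complex direction in $z_k$, since the defining differentials $d\Phi_i$ have vanishing $z_k$-derivatives there. A direct computation gives $df_k(\partial_{z_k})|_{z^0}=H_k(|z^0|^2)$ and $df_k(\partial_{\bar z_k})|_{z^0}=0$, so the invertibility of $df$ forces $H_k(|z^0|^2)\ne 0$. For $z_k^0\ne 0$, equivariance implies that $f$ sends the fixed-set $\{z_k=0\}\cap\mathcal{Z}_P$ of the $k$-th circle onto $\{w_k=0\}\cap\mathcal{Z}_Q$, so $f_k(z^0)\ne 0$ and hence $H_k(|z^0|^2)\ne 0$ as well. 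Thus $|H_k|$ and $\arg H_k$ are smooth on $P$.

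For part (2), in the quadratic chart at a vertex $v=F_{i_1}\cap\cdots\cap F_{i_n}$ the coordinates are $(R_{i_1},\ldots,R_{i_n})$, while $R_j$ for $j\notin\omega_v$ is a smooth positive function of these via $R_j=\sqrt{\widetilde a^v_{j,1}R_{i_1}^2+\cdots+\widetilde a^v_{j,n}R_{i_n}^2+\widetilde b^v_j}$. Hence $g_{i_s}(R)=R_{i_s}|H_{i_s}(R^2)|$ is smooth in $(R_{i_1},\ldots,R_{i_n})$, and the chain-rule identity $\partial/\partial R_{i_t}=2R_{i_t}\,\partial/\partial y_{i_t}$ yields
\[
\frac{\partial g_{i_s}}{\partial R_{i_t}}=\delta_{st}|H_{i_s}|+2R_{i_s}R_{i_t}\,\frac{\partial|H_{i_s}|}{\partial y_{i_t}},
\]
which vanishes for $t\ne s$ when $R_{i_t}=0$, as required. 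At the vertex $R=0$ the Jacobian is diagonal with positive entries $|H_{i_s}(0)|$, hence invertible; elsewhere invertibility of $dg$ follows from that of $df$. Applying the same construction to $f^{-1}$ produces the smooth inverse, so $g$ is a $C^1$ (in fact $C^\infty$) diffeomorphism.

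For part (3), substituting $g_k(R)=R_k|H_k(R^2)|$ into the formula gives the clean expression $\widehat{f}_k(u)=u_k|H_k(u^2)|$. Since $|H_k|$ is smooth on $P$ and $u\mapsto u^2$ is smooth, $\widehat{f}_k$ is smooth in $u$; the $\mathbb{Z}_2^m$-equivariance is immediate from this form, and the image lies in $\mathbb{R}\mathcal{Z}_Q$ because $|\widehat f_k(u)|^2=g_k(|u|)^2$ and $g$ sends $P$ to $Q$. Applying the same construction to $f^{-1}$ yields the smooth inverse. The main obstacle throughout is the non-vanishing of $H_k$ on the boundary strata of $P$: once that is secured, smoothness of $|H_k|$ and invertibility of every relevant Jacobian follow, and the rest of the argument is direct calculus in the quadratic and symmetric charts.
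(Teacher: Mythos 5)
Your proof is correct in outline and takes a genuinely different, and in fact stronger, route than the paper's. The paper reduces (Lemmas~\ref{RPl1}, \ref{RPl2}) to a $\mathbb T^n$-equivariant diffeomorphism of invariant neighbourhoods of $\bs 0$ in $\mathbb C^n$ and then carries out a hands-on $C^1$ argument: it computes $\partial f_j/\partial u_j$, $\partial f_j/\partial v_j$, $\partial f_j/\partial u_k$ etc.\ in polar coordinates, passes to the limit along $\varphi=0$, and shows that the one-sided derivatives $\partial g_j/\partial_+ R_k$ exist and are continuous (with $\partial g_j/\partial_+ R_k=0$ for $k\ne j$). This yields only $C^1$, which is exactly what the theorem claims. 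You instead invoke equivariant invariant theory to factor $f_k(\bs z)=z_k H_k(|z_1|^2,\dots,|z_m|^2)$ with $H_k$ \emph{smooth} in $y=|z|^2$; once $H_k$ is shown nonvanishing (via nondegeneracy of $df$, which you argue correctly on the fixed strata and by equivariance elsewhere), $g_k=R_k|H_k|$ and $\w f_k=u_k|H_k(u^2)|$ are manifestly $C^\infty$ in the quadratic and symmetric charts, which is more than the paper establishes. So the two approaches buy different things: the paper's is elementary and self-contained but gives only $C^1$; yours is shorter and gives $C^\infty$ at the cost of a substantial black box.

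Two caveats worth recording. First, the attribution is imprecise: Schwarz's theorem concerns smooth \emph{invariant} functions, whereas the factorization of the $\mathbb T^m$-\emph{equivariant} $f_k$ through $z_k$ times an invariant requires the subsequent module-theoretic refinement, proved by Po\'enaru and Bierstone via the Malgrange preparation theorem, that smooth equivariants of compact linear actions are generated over smooth invariants by polynomial equivariant generators (here the single generator $z_k$ in weight $e_k$). You should cite that result, not Schwarz, for the factorization step; Schwarz then applies to the invariant factor. Second, the Po\'enaru--Bierstone theorem is stated for representations (or germs thereof), while $f$ is defined only on the submanifold $\mathcal Z_P\subset\mathbb C^m$. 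You must therefore either (i) pass as the paper does to the chart $U_v\simeq S_v\times\mathbb T^{m-n}$ and apply the theorem to the $\mathbb T^n$-equivariant map $f_v\colon S_v\to S_w$ of invariant open sets in $\mathbb C^n$, using its local (germ) form at points with nontrivial stabiliser, or (ii) extend $f$ equivariantly to a $\mathbb T^m$-invariant tubular neighbourhood of $\mathcal Z_P$ in $\mathbb C^m$ using the fixed equivariant trivialisation of the normal bundle. Either repair is routine; without it, the step ``$H_k$ is smooth in $y$'' is not literally justified. Finally, your chain-rule display for $\partial g_{i_s}/\partial R_{i_t}$ only records the $y_{i_t}$-contribution and omits the contribution through the dependent coordinates $y_j$, $j\notin\omega_v$ (which are affine in $y_{i_1},\dots,y_{i_n}$); the omitted terms also carry an overall factor $R_{i_t}$, so the conclusion $\partial g_{i_s}/\partial R_{i_t}\to 0$ as $R_{i_t}\to 0$ for $t\ne s$ is still correct, but the displayed identity as written is incomplete.
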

\begin{corollary}[\cite{W13}]
Combinatorially equivalent simple polytopes are diffeomorphic as manifolds with corners (with linear smooth structures).
\end{corollary}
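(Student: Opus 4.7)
The plan is to assemble the corollary by chaining together three diffeomorphisms that have already been established in the paper. Start from the combinatorial equivalence of two simple polytopes $P$ and $Q$: by the Bosio--Meersseman theorem (\cite[Corollary~4.7]{BM06}), pick a $\mathbb{T}^m$-equivariant diffeomorphism $f\colon \mathcal{Z}_P\to \mathcal{Z}_Q$ between the corresponding moment-angle manifolds. This is the only nontrivial input; everything else is extraction and composition.

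Next, apply Theorem \ref{RPth}(2) to this $f$. It produces a map $g\colon P\to Q$ which is a $C^1$-diffeomorphism of manifolds with corners when both $P$ and $Q$ are equipped with the quadratic smooth structure of Example~\ref{ex:smpind}, and which sends facets to facets (so it respects the corner stratification). At this stage I already have a diffeomorphism between $P$ and $Q$, but with respect to the ``wrong'' smooth structures.

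To convert this into a diffeomorphism with respect to the linear smooth structures of Example~\ref{ex:smpnat}, invoke Proposition~\ref{prop:lqss}: for each polytope there is a diffeomorphism between its linear and quadratic smooth structures. Denote these by $\Phi_P\colon (P,\text{linear})\to (P,\text{quadratic})$ and $\Phi_Q\colon (Q,\text{linear})\to (Q,\text{quadratic})$. The composition
\[
\Phi_Q^{-1}\circ g\circ \Phi_P \colon (P,\text{linear})\longrightarrow (Q,\text{linear})
\]
is then the desired diffeomorphism of manifolds with corners with linear smooth structures.

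The only subtle point, rather than an obstacle, is the smoothness class: Theorem~\ref{RPth} delivers $g$ only in class $C^1$, so the resulting diffeomorphism between the linear structures on $P$ and $Q$ is $C^1$ as well. This is enough for the corollary as stated (in the sense of manifolds with corners), and matches the Wiemeler result being re-derived; upgrading to higher regularity would require a separate smoothing argument, which is not attempted here.
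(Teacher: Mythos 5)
Your proposal is correct and is precisely the argument the paper intends: Bosio--Meersseman gives $f\colon\mathcal{Z}_P\to\mathcal{Z}_Q$, Theorem~\ref{RPth}(2) extracts a $C^1$-diffeomorphism $g\colon P\to Q$ of quadratic structures, and Proposition~\ref{prop:lqss} transfers this to the linear structures by conjugation. The paper's proof compresses all of this into one sentence (``This follows from the fact that linear and quadratic smooth structures are diffeomorphic''), so your write-up is the same proof, just spelled out explicitly, including the honest caveat about only obtaining $C^1$ regularity.
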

\begin{proof}
This follows from the fact that linear and quadratic smooth structures are diffeomorphic.
\end{proof}
\begin{remark}
In \cite[Section 3]{KMY18} S.~Kuroki, M.~Masuda, L.~Yu give a sketch of a proof that there 
is a unique smooth structure on $\mathbb R\mathcal{Z}_P$ such that the
canonical $\mathbb Z_2^m$ action is smooth (as well as a unique smooth structure on a small cover such
that the canonical $\mathbb Z_2^n$-action is smooth). It follows the approach of M.~Wiemeler \cite{W13} 
and uses the notions of $G$-normal and $B$-normal systems introduced and studied by M.W.~Davis in~\cite{D78, D14}.
\end{remark}
\begin{proof}[Proof of Theorem~\ref{RPth}]
Let  $z_k=u_k+iv_k=R_ke^{i\varphi_k}$ for all $k=1,\dots,m$. 
Since $f$ is $\mathbb T^m$-equivariant, we have 
\begin{multline*}
f(R_1e^{i\varphi_1},\dots,R_m e^{i\varphi_m})=(f_1,\dots, f_m)=
(e^{i\varphi_1},\dots, e^{i\varphi_m})f(R_1,\dots, R_m)=\\
(e^{i\varphi_1}g_1(R_1,\dots, R_m)e^{i\psi_1(R_1,\dots, R_m)},\dots,
e^{i\varphi_m}g_m(R_1,\dots, R_m)e^{i\psi_m(R_1,\dots, R_m)}),
\end{multline*}
where $g_k(R_1,\dots,R_m)\geqslant 0$ are continuous functions on $P$
and each $\psi_k(R_1, \dots, R_m)$ is~a~continuous function on $P\setminus F_k$. The~mapping
$g(R_1,\dots, R_m)$ is a homeomorphism between the orbit spaces $P$ and $Q$.
Since $f$ preserves the stabilizers of points, $g$ maps each facet of $P$
defined by the equation $R_k=0$ to the facet of $Q$ defined by the~equation $g_k=0$. 
Then the~mapping $\widehat{f}\colon \mathbb R\mathcal{Z}_P\to\mathbb R\mathcal{Z}_Q$ 
is~a~$\mathbb Z_2^m$-equivariant
homeomorphism. We need to prove that $g$ and $\widehat{f}$ are smooth. 

For a subset $\tau=\{k_1,\dots,k_l\}\subset[m]$ and variables $\bs{X}=(X_1,\dots, X_m)$ define 
$\boldsymbol{X}_{\tau}=(X_{k_1},\dots, X_{k_l})$. For a vertex $v=F_{i_1}\cap\dots\cap F_{i_n}\in P$
define $\boldsymbol{X}_{v}=\boldsymbol{X}_{\omega_v}$,
$\bs{X}_{v\setminus j}=\bs{X}_{\omega_v\setminus\{j\}}$, $\bs{X}_{\widehat{v}}=\bs{X}_{[m]\setminus\omega_v}$,
where $\omega_v=\{i_1,\dots, i_n\}$.

Since polar coordinates are $C^{\infty}$-smooth on $\mathbb C\setminus\{0\}$, $g$ and 
$\widehat{f}$ are smooth over $P\setminus\partial P$. Consider a point $\bs{x}^0\in\partial P$.
Let $\omega_0=\{j\colon \bs{x}^0\in F_j\}$ and $G_0=\bigcap_{j\in \omega_0}F_j$. Take any 
vertex $v=F_{i_1}\cap\dots\cap F_{i_n}\in G_0$.
Any point $\bs{z}^0\in \rho_P^{-1}(\bs{x}^0)\in\mathcal{Z}_P$ belongs 
to~$U_v^P\simeq S_v\times \mathbb T^{m-n}$ (see (\ref{def:Sv})), where $z_{i_1}$, $\dots$, $z_{i_n}$,
$\varphi_j$, $j\notin \omega_v$, are smooth coordinates. Similarly, $f(\bs{z}^0)\in U_w^Q$, $w=g(v)$ 
(with $\omega_v=\omega_w$), where  $\wt{z}_{i_1}$, $\dots$, $\wt{z}_{i_n}$,
$\wt{\varphi}_j$, $j\notin \omega_v$ are smooth coordinates. In local coordinates $f$ has the form  
\begin{gather*}
\wt{z}_j=\wt{f}_j(\bs{z}_v)=\wt{g}_j(\bs{R}_v)e^{i(\varphi_j+\wt{\psi}_j(\bs{R}_v))},\;j\in\omega_v;\\
\wt{\varphi}_j=\varphi_j+\wt{\psi}_j(\bs{R}_v), \;j\notin \omega_v.
\end{gather*}

\begin{lemma}\label{RPl1}
The mapping $f_v\colon S_v^P\to S_w^Q$ (see (\ref{def:Sv})) defined as 
$$
f_v(\bs{z}_v)=(\wt{f}_{i_1}(\bs{z}_v),\dots, \wt{f}_{i_n}(\bs{z}_v))=(\wt{g}_{i_1}(\bs{R}_v)e^{i(\varphi_{i_1}+\wt{\psi}_{i_1}(\bs{R}_v))},
\dots, \wt{g}_{i_n}(\bs{R}_v)e^{i(\varphi_{i_n}+\wt{\psi}_{i_n}(\bs{R}_v))}),
$$
is a $\mathbb T^n$-equivariant diffeomorphism with respect to the standard action
of $\mathbb T^n$ on~$\mathbb C^n$.
\end{lemma}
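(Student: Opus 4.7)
The plan is to exploit the $\mathbb T^m$-equivariance of $f$ together with the product decomposition $U_v^P\simeq S_v^P\times \mathbb T^{m-n}$ of the smooth chart around $v$. The subtorus $\mathbb T^n\subset\mathbb T^m$ rotating only the coordinates $z_{i_1},\dots,z_{i_n}$ acts on $S_v^P$ as the standard $\mathbb T^n$-action on $\mathbb C^n$, and $\mathbb T^n$-equivariance of $f_v$ is then immediate from the given formula $\wt f_{i_s}(\bs z_v) = \wt g_{i_s}(\bs R_v)e^{i(\varphi_{i_s}+\wt\psi_{i_s}(\bs R_v))}$: a shift $\varphi_{i_s}\mapsto \varphi_{i_s}+\alpha_s$ multiplies the $s$-th component by $e^{i\alpha_s}$, while $\bs R_v$ is unchanged.

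To see that $f_v$ is smooth and lands in $S_w^Q$, I first observe that $f(U_v^P)=U_w^Q$ with $\omega_w=\omega_v$, because $\mathbb T^m$-equivariance forces $f$ to preserve the stabilizers of points, and the stabilizer of a point in $U_v^P$ is the coordinate subtorus indexed by $\{j\colon z_j=0\}\subset\omega_v$. I would then write $f_v=\pi_w\circ f\circ \sigma_v$, where $\sigma_v\colon S_v^P\to U_v^P$ is the smooth section sending $\bs z_v$ to the point with $\varphi_j=0$ for all $j\notin\omega_v$, and $\pi_w\colon U_w^Q\simeq S_w^Q\times\mathbb T^{m-n}\to S_w^Q$ is the projection onto the first $n$ complex coordinates. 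As a composition of smooth maps between open subsets of Euclidean space, $f_v$ is smooth, and the output lies in $S_w^Q$ by construction.

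For bijectivity with a smooth inverse, I apply the same construction to $f^{-1}\colon \mathcal Z_Q\to\mathcal Z_P$, which by part~(1) of Theorem~\ref{RPth} (with the roles of $P$ and $Q$ swapped) has the same equivariant shape, to obtain a smooth map $(f^{-1})_w\colon S_w^Q\to S_v^P$. The identity $(f^{-1})_w\circ f_v=\mathrm{id}_{S_v^P}$ will follow from $\mathbb T^m$-equivariance: the points $f(\sigma_v(\bs z_v))$ and $\sigma_w(f_v(\bs z_v))$ lie in the same $\mathbb T^{m-n}$-orbit (they differ by the torus element with $\widehat v$-coordinates $e^{i\wt\psi_j(\bs R_v)}$), so their preimages under $f^{-1}$ lie in the same $\mathbb T^{m-n}$-orbit in $U_v^P$. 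The first preimage is $\sigma_v(\bs z_v)$ by definition, and since $\pi_v$ is constant on $\mathbb T^{m-n}$-orbits we conclude $\pi_v(f^{-1}(\sigma_w(f_v(\bs z_v))))=\bs z_v$. The reverse composition is symmetric.

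The only delicate point is this final equivariance bookkeeping: one must check carefully that the $\wt\psi_j$, which modify angular coordinates transverse to $S_v^P$, do not interfere with the projection onto $S_v^P$. Once this is in place, $f_v$ is a smooth bijection with a smooth inverse, hence the required $\mathbb T^n$-equivariant diffeomorphism.
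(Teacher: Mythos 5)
Your proof is correct and follows essentially the same route as the paper: both exploit the product decomposition $U_v\simeq S_v\times\mathbb T^{m-n}$ and the fact that $\mathbb T^m$-equivariance forces $\wt f_{i_s}$ to depend only on $\bs z_v$. The paper says this in one line (``$\wt f_j$, $j\in\omega_v$, does not depend on $\varphi_k$, $k\notin\omega_v$, and the same holds for $\wt f^{-1}$''), whereas you make the same point explicit by writing $f_v=\pi_w\circ f\circ\sigma_v$ and then carrying out the orbit bookkeeping to check $(f^{-1})_w\circ f_v=\mathrm{id}$; the extra care you flag at the end is precisely the content of the paper's one-line observation, so there is no gap.
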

\begin{proof}
Since $\wt{f}_j$, $j\in \omega_v$, does not depend on $\varphi_k$, $k\notin\omega_v$, and
the same holds for $\wt{f}^{-1}$, we see that $f_v$ and $f^{-1}_w$ are 
smooth mappings inverse to each other. So they are both diffeomorphisms. Moreover, by definition
they are $\mathbb T^n$-equivariant.
\end{proof}
\begin{lemma}\label{RPl2}
Let $f\colon A\to B$ be a $\mathbb T^n$-equivariant diffeomorphism of open $\mathbb T^n$-equivariant
neighbourhoods $A,B\subset \mathbb C^n$ of $\bs{0}$. Then 
\begin{enumerate}
\item $f(\bs{z})=(g_1(\bs{R})e^{i(\varphi_1+\psi_1(\bs{R}))},
\dots, 
g_n(\bs{R})e^{i(\varphi_n+\psi_n(\bs{R}))})$, 
where $g_k=0$ if $R_k=0$, and  $g_k(\bs{R})\geqslant 0$, $\psi_k(\bs{R})$ are continuous functions on
$A\cap\mathbb R^n_{\geqslant}$.
\item The mapping $g(\bs{R})=(g_1(\bs{R}), \dots, g_n(\bs{R}))$ is a homeomorphism  
between $A\cap \mathbb R^n_{\geqslant}$ and $B\cap \mathbb R^n_{\geqslant}$.
Its partial derivatives $\frac{\partial g_k}{\partial R_l}$ (when $R_l=0$ it is $\frac{\partial g_k}{\partial_+R_l}$) 
are defined and continuous, and $\frac{\partial g_k}{\partial_+R_l}=0$ when $k\ne l$ and $R_l=0$.
\item The mapping
$$
\widehat{f}(u_1,\dots,u_n)=({\rm sign}(u_1)g_1(|u_1|,\dots, |u_n|),\dots, {\rm sign}(u_n)g_n(|u_1|,\dots, |u_n|))
$$
is a $\mathbb Z_2^n$-equivariant diffeomorphism $A\cap\mathbb R^n\to B\cap \mathbb R^n$
of smoothness at~least~$C^1$.
\end{enumerate}
\end{lemma}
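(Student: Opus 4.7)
The plan is to first extract from $\mathbb T^n$-equivariance an explicit factored form for each component of $f$, and then use the Schwarz theorem on smooth invariants of compact group actions to upgrade continuity in the square-root coordinates to $C^\infty$-smoothness. For part~(1), equivariance with respect to the $k$-th circle factor (setting all other $\alpha_j=0$) gives $f_k(\ldots,e^{i\alpha_k}z_k,\ldots)=e^{i\alpha_k}f_k(\bs z)$; iterating over $k$ yields $f_k(\bs z)=e^{i\varphi_k}f_k(\bs R)$ wherever every $R_j>0$, and by continuity everywhere on $A$. Writing $f_k(\bs R)=g_k(\bs R)\,e^{i\psi_k(\bs R)}$ with $g_k\geqslant 0$ completes the form; since $f$ is a diffeomorphism it preserves $\mathbb T^n$-orbit types, so $g_k(\bs R)=0$ exactly when $R_k=0$, and $g$ is a homeomorphism $A\cap\mathbb R^n_{\geqslant}\to B\cap\mathbb R^n_{\geqslant}$.

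The key technical step is the factorization
\[
f_k(\bs z)\;=\;z_k\cdot F_k\bigl(|z_1|^2,\ldots,|z_n|^2\bigr)
\]
for a smooth $\mathbb C$-valued function $F_k$ on a neighbourhood of $\bs 0\in\mathbb R^n$. The product $\bar z_k\,f_k(\bs z)$ is $\mathbb T^n$-invariant (the $S^1_k$-weights of $\bar z_k$ and $f_k$ cancel, and the remaining $S^1_j$ act trivially on $\bar z_k$ and leave $f_k$ fixed by equivariance), so by Schwarz's theorem it is a smooth function of $|z_1|^2,\ldots,|z_n|^2$; since it vanishes on $\{z_k=0\}$, Hadamard's lemma factors out $|z_k|^2=z_k\bar z_k$, and then dividing by $\bar z_k$ yields the claim. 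Comparing with part~(1) gives $g_k(\bs R)=R_k|F_k(R_1^2,\ldots,R_n^2)|$ and $\psi_k(\bs R)=\arg F_k(R_1^2,\ldots,R_n^2)$.

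The hard part is verifying that $F_k$ is nowhere zero on the relevant domain, so that $r_k:=|F_k|$ is smooth and positive. When $R_k>0$ this is immediate from $g_k>0$. At a point $\bs z^0\in A$ with $z_k^0=0$ one has $F_k(|z_1^0|^2,\ldots,|z_n^0|^2)=(\partial f_k/\partial z_k)|_{\bs z^0}$, and invertibility of $df|_{\bs z^0}$ combined with $S^1_k$-equivariance (since $\bs z^0$ is a fixed point of $S^1_k$) forces this complex number to be non-zero: the linear map $df|_{\bs z^0}$ preserves the $S^1_k$-weight decomposition of $T_{\bs z^0}\mathbb C^n\cong T_{f(\bs z^0)}\mathbb C^n$, and its restriction to the $k$-th weight-$1$ complex factor must be an invertible complex scalar.

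Once $r_k$ is smooth and positive, $g_k(\bs R)=R_k\,r_k(R_1^2,\ldots,R_n^2)$ is $C^\infty$ in $\bs R$, giving part~(2); the cross-partials $\partial g_k/\partial_+ R_l$ at $R_l=0$ with $l\neq k$ vanish by inspection of the explicit form. For part~(3),
\[
\w f_k(\bs u)\;=\;\operatorname{sign}(u_k)\,g_k(|u_1|,\ldots,|u_n|)\;=\;u_k\cdot r_k(u_1^2,\ldots,u_n^2)
\]
is manifestly smooth in $\bs u$ and $\mathbb Z_2^n$-equivariant. Applying the entire construction to $f^{-1}$ produces $\widehat{f^{-1}}$, and checking signs and moduli componentwise shows it is a two-sided inverse to $\w f$, which therefore is the required $\mathbb Z_2^n$-equivariant diffeomorphism.
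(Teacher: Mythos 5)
Your proof takes a genuinely different route from the paper's. The paper's proof is entirely elementary: it writes $f_j$ in polar form, directly computes partial derivatives of $g_j$ using the change of variables between $(u_j,v_j)$ and $(R_j,\varphi_j)$, and carries out a careful limit analysis to show that $\partial g_j/\partial R_k$ extends continuously to the boundary of $\mathbb R^n_{\geqslant}$. This is self-contained but yields only $C^1$ regularity (as the lemma itself states). You instead establish the factorization $f_k(\bs z)=z_k\,F_k(|z_1|^2,\ldots,|z_n|^2)$ with $F_k$ smooth and nowhere vanishing, which immediately gives $\widehat f_k(\bs u)=u_k\,|F_k(u_1^2,\ldots,u_n^2)|$ and hence $C^\infty$ regularity --- strictly stronger than the paper's conclusion. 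The verification that $F_k\ne 0$ via invertibility of $df$ restricted to the weight-one isotypic factor of the $S^1_k$-action at a fixed point is correct (one checks that the $(z_k,\bar z_k)$ columns of the Wirtinger Jacobian at $\bs z^0$ reduce to the $2\times 2$ block $\operatorname{diag}(F_k,\overline{F_k})$).

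One imprecision worth flagging: the step ``$\bar z_k f_k$ is invariant, so by Schwarz it is $H_k(|z_1|^2,\ldots,|z_n|^2)$; it vanishes on $\{z_k=0\}$, so Hadamard factors out $|z_k|^2$'' is not airtight as written. The invariant-function Schwarz theorem gives a smooth $H_k$ on an open neighbourhood of $\rho(A)\subset\mathbb R^n_{\geqslant}$, but we only know $H_k=0$ on $\{t_k=0\}\cap\mathbb R^n_{\geqslant}$, which near a corner point (some $t_j^0=0$, $j\ne k$) is not a full neighbourhood inside the hyperplane $\{t_k=0\}$; the naive Hadamard lemma therefore does not apply there. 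The correct citation is the \emph{equivariant} Schwarz theorem (Po\'enaru; see also Bierstone and Schwarz's own isotypic version), which directly states that smooth equivariant maps are module combinations of polynomial equivariants with smooth-invariant coefficients; for the standard $\mathbb T^n$-action on $\mathbb C^n$ this gives $f_k=z_k F_k(|z_1|^2,\ldots,|z_n|^2)$ outright. With that substitution your proof is correct, shorter than the paper's, and yields the stronger $C^\infty$ conclusion at the cost of invoking a nontrivial external theorem.
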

\begin{proof}
For a point $\bs{z}^0=(z^0_1,\dots,z^0_n)\in A$ set 
$$
\bs{u}^0=(u^0_1,\dots,u^0_n)={\rm Re}(\bs{z}^0),\; \bs{R}^0=(|z^0_1|,\dots, |z^0_n|), \text{ and }\omega_0=\{j\in[n]\colon z_j^0=0\}.
$$
For $j\notin \omega_0$ and $\varepsilon_j={\rm sign}(u^0_j)$ we have $f_j(\bs{u}^0)\ne 0$ and the~function  
$$
\widehat{f}_j(\bs{u})=\varepsilon_j g(|u_1|,\dots, |u_n|)=\varepsilon_j|f_j(u_1,\dots,u_n)|
\colon A\cap \mathbb R^n\to\mathbb R
$$ %остановился
is smooth at $\bs{u}^0$. Similarly, $g_j(\bs{R})=|f_j(R_1,\dots,R_n)|\colon A\cap \mathbb R^n_{\geqslant}\to\mathbb  R$ 
is smooth at~$\bs{R}^0$, where we consider $\frac{\partial g_j(\bs{R})}{\partial_+R_s}$ if $R_s=0$.

Now consider $j\in \omega_0$. Let  $z_k=u_k+iv_k=R_ke^{i\varphi_k}$ for all $k=1,\dots, n$. 
The function $f_j(z_1,\dots, z_n)\colon A\to\mathbb R$ is smooth at $\bs{z}^0$ and $f_j(\bs{z}^0)=0$.  

Since $f_j(\bs{z})=0$ if $z_j=z^0_j=0$, we have 
\begin{equation}\label{pfk0}
\frac{\partial f_j(\bs{z}^0)}{\partial u_k}=\frac{\partial f_j(\bs{z}^0)}{\partial v_k}=0\text{ for }k\ne j.
\end{equation}
Similarly, $\frac{\partial g_j(\bs{R}^0)}{\partial R_k}=0$ for $k\notin \omega_0$ and
$\frac{\partial g_j(\bs{R}^0)}{\partial_+R_k}=0$ for $k\in \omega_0\setminus\{j\}$.

Now let us calculate $\frac{\partial f_j(\bs{z}^0)}{\partial u_j}$ and $\frac{\partial f_j(\bs{z}^0)}{\partial v_j}$. We have 
\begin{gather*}
\frac{\partial f_j(\bs{z}^0)}{\partial u_j}=
\lim_{t\to 0}\frac{g_j(R^0_1,\dots, R^0_{j-1},|t|, R^0_{j+1},\dots, R^0_n){\rm sign}(t)e^{i \psi_j(R^0_1,\dots, |t|, \dots, R^0_n)}}{t}=\\
=\lim_{t\to 0}\frac{g_j(R^0_1,\dots, |t|, \dots, R^0_n)e^{i \psi_j(R^0_1,\dots, |t|, \dots, R^0_n)}}{|t|};\\
\frac{\partial f_j(\bs{z}^0)}{\partial v_j}=
\lim_{t\to 0}i\frac{g_j(R^0_1,\dots, |t|, \dots, R^0_n)e^{i \psi_j(R^0_1,\dots, |t|, \dots, R^0_n)}}{|t|}.
\end{gather*}
Moreover, $\frac{\partial f_j(\bs{z}^0)}{\partial u_j}\ne \bs{0}$ and $\frac{\partial f_j(\bs{z}^0)}{\partial v_j}\ne\bs{0}$,
since $df(\bs{z}^0)$ is non-degenerate and equalities (\ref{pfk0}) hold. Then 
the limits $\frac{\partial g_j(\bs{R}^0)}{\partial_+R_j}=
\lim_{t\to 0^+}\frac{g_j(R^0_1,\dots, t, \dots, R^0_n)}{t}>0$
and $\psi_j(\bs{R}^0)=\lim_{t\to 0^+} \psi_j(R^0_1,\dots, t, \dots, R^0_n)$ exist
and 
$$
\begin{pmatrix}
\frac{\partial f_j(\bs{z}^0)}{\partial u_j}&\frac{\partial f_j(\bs{z}^0)}{\partial v_j}\end{pmatrix}=
\begin{pmatrix}\frac{\partial g_j(\bs{R}^0)}{\partial_+R_j}e^{i \psi_j(\bs{R}^0)}&
i\frac{\partial g_j(\bs{R}^0)}{\partial_+R_j}e^{i\psi_j(\bs{R}^0)}\end{pmatrix}.
$$
This argument shows that for $\bs{R}\in A\cap\mathbb R^n_{\geqslant}$ with $R_j=0$ we have  
$$
\frac{\partial g_j(\bs{R})}{\partial_+R_j}=\left|\frac{\partial f_j(\bs{R})}{\partial u_j}\right|=
\left|\frac{\partial f_j(\bs{R})}{\partial v_j}\right|>0\text{ and }
\psi_j(\bs{R})={\rm arg}\left(\frac{\partial f_j(\bs{R})}{\partial u_j}\right).
$$
In particular,  
$$
\frac{\partial g_j(\bs{R})}{\partial_+R_j}\to \frac{\partial g_j(\bs{R}^0)}{\partial_+R_j},\text{ and } \psi_j(\bs{R})\to\psi_j(\bs{R}^0),
\text{ when }\bs{R}\to \bs{R}^0\text{ and }R_j=0.
$$

For $z\ne0$ the functions $R$ and $\varphi$ are smooth coordinates on~$\mathbb C$,
and for any smooth function $h\colon \mathbb C\to\mathbb C$ we have 
\begin{gather*}
\begin{pmatrix}\frac{\partial h}{\partial u}&\frac{\partial h}{\partial v}\end{pmatrix}=
\begin{pmatrix}\frac{\partial h}{\partial R}&\frac{\partial h}{\partial \varphi}\end{pmatrix}
\begin{pmatrix}\frac{\partial R}{\partial u}&\frac{\partial R}{\partial v}\\
\frac{\partial \varphi}{\partial u}&\frac{\partial \varphi}{\partial v}
\end{pmatrix}=
\begin{pmatrix}\frac{\partial h}{\partial R}&\frac{\partial h}{\partial \varphi}\end{pmatrix}
\begin{pmatrix}\frac{u}{R}&\frac{v}{R}\\
\frac{-v}{R^2}&\frac{u}{R^2}
\end{pmatrix}=\\
\frac{1}{R}\begin{pmatrix}u\frac{\partial h}{\partial R}-\frac{v}{R}\frac{\partial h}{\partial \varphi}&
v\frac{\partial h}{\partial R}+\frac{u}{R}\frac{\partial h}{\partial \varphi}\end{pmatrix}=
\begin{pmatrix}\cos\varphi\frac{\partial h}{\partial R}-\frac{\sin\varphi}{R}\frac{\partial h}{\partial \varphi}&
\sin\varphi\frac{\partial h}{\partial R}+\frac{\cos\varphi}{R}\frac{\partial h}{\partial \varphi}\end{pmatrix}.
\end{gather*}
In particular, for $R_j\ne 0$ we obtain 
\begin{gather*}
\frac{\partial f_j}{\partial u_j}=\cos\varphi_j\left(\frac{\partial g_j}{\partial R_j}+
i g_j\frac{\partial \psi_j}{\partial R_j}\right)e^{i(\varphi_j+\psi_j)}-
i\frac{\sin\varphi_j}{R_j}g_je^{i(\varphi_j+\psi_j)};\\
\frac{\partial f_j}{\partial v_j}=\sin\varphi_j\left(\frac{\partial g_j}{\partial R_j}+
i g_j\frac{\partial \psi_j}{\partial R_j}\right)e^{i(\varphi_j+\psi_j)}+
i\frac{\cos\varphi_j}{R_j}g_je^{i(\varphi_j+\psi_j)}.
\end{gather*}
%and 
%\begin{gather*}
%\begin{pmatrix}\frac{\partial f_j}{\partial u_j}&\frac{\partial f_j}{\partial v_j}\end{pmatrix}=
%e^{i\varphi_j+i\psi_j}\begin{pmatrix}\cos\varphi_j(\frac{\partial g_j}{\partial R_j}+
%i g_j\frac{\partial \psi_j}{\partial R_j})- i\frac{\sin\varphi_j}{R_j}g_j&
%\sin\varphi_j(\frac{\partial g_j}{\partial R_j}+
%i g_j\frac{\partial \psi_j}{\partial R_j})+
%i\frac{\cos\varphi_j}{R_j}g_j\end{pmatrix}
%\end{gather*}
We have $\begin{pmatrix}\frac{\partial f_j}{\partial u_j}&\frac{\partial f_j}{\partial v_j}\end{pmatrix}\to
\begin{pmatrix}\frac{\partial f_j}{\partial u_j}(\bs{z}^0)&\frac{\partial f_j}{\partial v_j}(\bs{z}^0)\end{pmatrix}$ 
when $\bs{z}\to\bs{z}^0$.  Now~put $\varphi_1=0$, $\dots$, $\varphi_n=0$.
Under this condition $\bs{z}$ is equal to $\bs{R}$ and tends to~$\bs{R}^0$, and  
\begin{gather*}
\begin{pmatrix}\frac{\partial f_j}{\partial u_j}&\frac{\partial f_j}{\partial v_j}\end{pmatrix}=
\begin{pmatrix}(\frac{\partial g_j}{\partial R_j}+
i g_j\frac{\partial \psi_j}{\partial R_j})e^{i \psi_j}
&
i\frac{g_j}{R_j}e^{i\psi_j}
\end{pmatrix}\to\begin{pmatrix}\frac{\partial g_j(\bs{R}^0)}{\partial_+R_j}e^{i \psi_j(\bs{R}^0)}&
i\frac{\partial g_j(\bs{R}^0)}{\partial_+R_j}e^{i \psi_j(\bs{R}^0)}\end{pmatrix}
\end{gather*}
From the second component we have 
$$
\frac{g_j(\bs{R})}{R_j}\to\frac{\partial g_j(\bs{R}^0)}{\partial_+R_j}
\text{ and }\psi_j(\bs{R})\to\psi_j(\bs{R}^0)\text{ when }\bs{R}\to\bs{R}^0\text{ and }R_j\ne 0.
$$ 
Now from the first component 
$$
\frac{\partial g_j(\bs{R})}{\partial R_j}+
i g_j(\bs{R})\frac{\partial \psi_j(\bs{R})}{\partial R_j}\to\frac{\partial g_j(\bs{R}^0)}{\partial_+R_j} 
\text{ when }\bs{R}\to\bs{R}^0\text{ and }R_j\ne 0.
$$
Hence, 
$$
\frac{\partial g_j(\bs{R})}{\partial R_j}\to\frac{\partial g_j(\bs{R}^0)}{\partial_+R_j} \text{ and }
g_j(\bs{R})\frac{\partial \psi_j(\bs{R})}{\partial R_j}\to\bs{0}
\text{ when  }\bs{R}\to\bs{R}^0\text{ and }R_j\ne 0.
$$

Thus, at the point $\bs{R}^0\in  A\cap \mathbb{R}^n_{\geqslant}$ the partial derivative 
$\frac{\partial g_j(\bs{R}^0)}{\partial_+R_j}$ is defined and is~equal to the limit
of partial derivatives $\frac{\partial g_j(\bs{R})}{\partial R_j}$ when $\bs{R}\to \bs{R}^0$ and $R_j\ne 0$
and  $\frac{\partial g_j(\bs{R})}{\partial_+ R_j}$ when $\bs{R}\to \bs{R}^0$ and $R_j=0$.

This proves that $\frac{\partial g_j(\bs{R})}{\partial R_j}$ is a continuous function on $A\cap \mathbb R^n_{\geqslant}$,
where in the points with $R_j=0$ we take a~one-sided derivative. Similarly,  $\psi_j(\bs{R}^0)$ is~a~continuous 
function~on~ $A\cap \mathbb R^n_{\geqslant}$.

For $R_j\ne 0$ and $R_k\ne 0$, $k\ne j$, we have 
\begin{gather*}
\begin{pmatrix}\frac{\partial f_j}{\partial u_k}&\frac{\partial f_j}{\partial v_k}\end{pmatrix}=
\begin{pmatrix}\cos\varphi_k\left(\frac{\partial g_j}{\partial R_k}+
i  g_j\frac{\partial \psi_j}{\partial R_k}\right)e^{i(\varphi_j+\psi_j)}&
\sin\varphi_k\left(\frac{\partial g_j}{\partial R_k}+
i  g_j\frac{\partial \psi_j}{\partial R_k}\right)e^{i(\varphi_j+\psi_j)}
\end{pmatrix}
\end{gather*}
Also $\begin{pmatrix}\frac{\partial f_j}{\partial u_k}&\frac{\partial f_j}{\partial v_k}\end{pmatrix}\to
\begin{pmatrix}\frac{\partial f_j}{\partial u_k}(\bs{z}^0)&\frac{\partial f_j}{\partial v_k}(\bs{z}^0)\end{pmatrix}=
\begin{pmatrix}\bs{0}&\bs{0}\end{pmatrix}$ 
when $\bs{z}\to\bs{z}^0$.  Put $\varphi_1=0$, $\dots$, $\varphi_n=0$.
Under this condition $\bs{z}=\bs{R}\to \bs{R}^0$, and  
\begin{gather*}
\begin{pmatrix}\frac{\partial f_j}{\partial u_k}&\frac{\partial f_j}{\partial v_k}\end{pmatrix}=
\begin{pmatrix}\left(\frac{\partial g_j}{\partial R_k}+
i  g_j\frac{\partial \psi_j}{\partial R_k}\right)e^{i \psi_j}
&\bs{0}\end{pmatrix}\to\begin{pmatrix}\bs{0},\bs{0}\end{pmatrix}
\end{gather*}
Then $\frac{\partial g_j}{\partial R_k}+
i g_j\frac{\partial \psi_j}{\partial R_k}\to \bs{0}$ when $\bs{R}\to\bs{R}^0$ and $R_j, R_k\ne 0$. 
In particular,  $g_j(\bs{R})\frac{\partial \psi_j(\bs{R})}{\partial R_k}\to \bs{0}$, 
and 
$$
\frac{\partial g_j(\bs{R})}{\partial R_k}\to\bs{0}=\begin{cases}\frac{\partial g_j}{\partial R_k}(\bs{R}^0),
&\text{ if }k\notin \omega_0;\\
\frac{\partial g_j}{\partial_+R_k}(\bs{R}^0),&\text{ if }k\in \omega_0\setminus\{j\}.
\end{cases}
$$
%For $R_j\ne 0$ and $R_k=0$ we can find the partial derivative as a limit of partial derivatives of points
%with $R_j\ne 0$ and $R_k\ne 0$. In particular, the limits for $\varphi_k=0$ and $\varphi_k=\pi$ should coincide.
%For $\varphi_k=0$ and $\varphi_j=0$ we have 
%$$
%\frac{\partial \wt{f}_j}{\partial u_k}=(\frac{\partial \wt{g}_j}{\partial R_k}+
%i \wt{g}_j\frac{\partial \wt{\psi_j}}{\partial R_k})e^{i\wt{\psi}_j}
%$$ 
%and for $\varphi_k=\pi$ and $\varphi_j=0$  we have 
%$$
%\frac{\partial \wt{f}_j}{\partial u_k}=-(\frac{\partial \wt{g}_j}{\partial R_k}+
%i \wt{g}_j\frac{\partial \wt{\psi_j}}{\partial R_k})e^{i\wt{\psi}_j}
%$$ 
%These functions have the same limit, hence they both tend to $\bs{0}=\frac{\partial \wt{f}_j}{\partial u_k}$. In particular, 
%$\frac{\partial \wt{g}_j}{\partial R_k}\to0$ when $R_k\to 0$. 
%Moreover, for $R_k=0$ and $\varphi_j=0$ we have 
%\begin{gather*}
%0=\frac{\partial f_j(\bs{u})}{\partial u_k}=\lim_{t\to 0}\frac{\widetilde{f}_j(t,\bs{z}_{v\setminus k}^0,\bs{\varphi}_{\w{v}}^0)-
%\widetilde{f}_j(0,\bs{z}_{v\setminus k}^0,\bs{\varphi}_{\w{v}}^0)}{t}=\\
%\lim_{t\to 0}\frac{\widetilde{g}_j(|t|,\bs{R}_{v\setminus k}^0)e^{i\widetilde{\psi}_j(|t|,\bs{R}_{v\setminus j}^0)}-
%\widetilde{g}_j(0,\bs{R}_{v\setminus k}^0)e^{i\widetilde{\psi}_j(0,\bs{R}_{v\setminus k}^0)}}{t}
%\end{gather*}
For $u_j\ne 0$ the function $h_j(\bs{u})=g_j(|u_1|,\dots,|u_n|)=|f_j(u_1,\dots,u_n)|$ 
is smooth. Moreover,  
$$
\frac{\partial h_j(\bs{u})}{\partial u_k}=\begin{cases}
\frac{\partial g_j(|u_1|,\dots,|u_n|)}{\partial R_k},&\text{ if }u_k>0;\\
-\frac{\partial g_j(|u_1|,\dots,|u_n|)}{\partial R_k},&\text{ if }u_k<0,
\end{cases}
$$
and for $u_k=0$
\begin{gather*}
\frac{\partial h_j(\bs{u})}{\partial u_k}=\frac{\partial h_j(\bs{u})}{\partial_+u_k}=
\lim_{t\to 0^+}\frac{h_j(u_1,\dots, t,\dots, u_n)-h_j(u_1,\dots, 0,\dots, u_n)}{t}=\\
\lim_{t\to 0^+}\frac{g_j(|u_1|,\dots, t,\dots, |u_n|)-g_j(|u_1|,\dots, 0,\dots, |u_n|)}{t}=
\frac{\partial g_j(|u_1|,\dots,|u_n|)}{\partial_+R_k}=\\
=\frac{\partial h_j(\bs{u})}{\partial_-u_k}=
\lim_{t\to 0^-}\frac{h_j(u_1,\dots, t,\dots, u_n)-h_j(u_1,\dots, 0,\dots, u_n)}{t}=\\
\lim_{t\to 0^-}\frac{g_j(|u_1|,\dots, -t,\dots, |u_n|)-g_j(|u_1|,\dots, 0,\dots, |u_n|)}{t}=
-\frac{\partial g_j(|u_1|,\dots,|u_n|)}{\partial_+R_k}
\end{gather*}
Hence, $\frac{\partial g_j(\bs{R})}{\partial_+R_k}=0$ when $R_k=0$ and $R_j\geqslant 0$. 

This proves that $\frac{\partial g_j(\bs{R})}{\partial R_k}$, $k\ne j$, is a continuous function on $A\cap \mathbb R^n_{\geqslant}$,
where in the points with $R_k=0$ we take a~one-sided derivative (and it is equal to $0$).

%Hence, for $\bs{u}_1$ with  $u_k=0$ and $u_j\ne 0$ we have 
%\begin{gather*}
%\frac{\partial h_j(\bs{u}_1)}{\partial u_k}=\lim_{\bs{u}\to\bs{u}_1, u_k\ne 0}\frac{\partial h_j(\bs{u})}{\partial u_k}\text{ where }
%\frac{\partial h_j(\bs{u})}{\partial u_k}=
%\begin{cases}
%\frac{\partial \wt{g}_j(|u_{i_1}|,\dots,|u_{i_n}|)}{\partial R_k},&\text{ if }u_k>0;\\
%-\frac{\partial \wt{g}_j(|u_{i_1}|,\dots,|u_{i_n}|)}{\partial R_k},&\text{ if }u_k<0;\\
%\end{cases}
%\end{gather*}
%Hence, $\frac{\partial \wt{g}_j(|u_{i_1}|,\dots,|u_{i_n}|)}{\partial R_k}\to 0$ when $\bs{u}\to \bs{u}_1$. Thus, 
%\begin{gather*}
%0=\frac{\partial h_j(\bs{u}_1)}{\partial u_k}=\frac{\partial h_j(\bs{u}_1)}{\partial_+ u_k}
%=\lim_{t\to 0^+}\frac{h_j(u_{i_1},\dots, t,\dots, u_{i_n})-h_j(u_{i_1},\dots, 0,\dots, u_{i_n})}{t}=\\
%\lim_{t\to 0^+}\frac{\wt{g}_j(|u_{i_1}|,\dots, t,\dots, |u_{i_n}|)-\wt{g}_j(|u_{i_1}|,\dots, 0,\dots, |u_{i_n}|)}{t}=
%\frac{\partial \wt{g}_j(|u_{i_1}|,\dots,|u_{i_n}|)}{\partial_+R_k}.
%\end{gather*}

Now consider $\w{f}_j(u_1,\dots,u_n)={\rm sign}(u_j) g_j(|u_1|,\dots,|u_n|)$ at $\bs{u}^0$. 
It is a continuous function, since $g_j(\bs{R})=0$ for $R_j=0$ and
$$
\w{f}_j(\bs{u})=\begin{cases}g_j(|u_1|,\dots,|u_n|),&\text{ if } u_j\geqslant 0;\\
-g_j(|u_1|,\dots,|u_n|),&\text{ if }u_j\leqslant 0;\\
0,&\text{ if }u_j=0.
\end{cases}
$$
For $k\ne j$ we have $\frac{\partial \w{f}_j(\bs{u})}{\partial u_k}=0$ if $u_j=0$, 
in particular for $\bs{u}^0$.
Moreover, for any~$u_j\geqslant 0$  
$$
\frac{\partial\w{f}_j(\bs{u})}{\partial u_k}={\rm sign}(u_j){\rm sign}(u_k)\frac{\partial g_j(|u_1|,\dots,|u_n|)}{\partial R_k}=
\begin{cases}
{\rm sign}(u_j)\frac{\partial g_j(|u_1|,\dots,|u_n|)}{\partial R_k},
&\text{ if }u_k>0;\\
-{\rm sign}(u_j)\frac{\partial g_j(|u_1|,\dots,|u_n|)}{\partial R_k},
&\text{ if }u_k<0;\\
0=\frac{\partial g_j(|u_1|,\dots,|u_n|)}{\partial_+R_k},
&\text{ if }u_k=0.
\end{cases}
$$
Hence, $\frac{\partial\w{f}_j(\bs{u})}{\partial u_k}$ is a continuous function at $\bs{u}^0$.
At last, for $k=j$ we have 
$$
\frac{\partial\w{f}_j(\bs{u})}{\partial u_j}=
\begin{cases}
\frac{\partial g_j(|u_1|,\dots,|u_n|)}{\partial R_j},
&\text{ if }u_j\ne 0;\\
\frac{\partial g_j(|u_1|,\dots,|u_n|)}{\partial_+R_j}>0,
&\text{ if }u_j=0.
\end{cases}
$$
Thus, $\frac{\partial\w{f}_j(\bs{u})}{\partial u_k}$ is also a continuous function at $\bs{u}^0$, and  
$\w{f}_j(\bs{u})$ is smooth at  $\bs{u}^0$. This finishes the proof. 
\end{proof}
Now Theorem~\ref{RPth} follows from Lemmas~\ref{RPl1} and~\ref{RPl2}. 

\end{proof}

\section{Acknowledgements}
The authors are grateful to V.M.~Buchstaber and A.A.~Gaifullin for fruitful discussions.

\end{document}